\documentclass[]{amsart}
\usepackage{amsmath,amsfonts,amssymb,amsthm}
\usepackage{enumerate, color}
\usepackage{mathrsfs}
 \DeclareMathOperator{\ord}{ord}
 \DeclareMathOperator{\re}{Re}
\DeclareMathOperator{\id}{id} \DeclareMathOperator{\im}{Im}

\DeclareMathOperator{\cs}{CS}
\DeclareMathOperator{\res}{res}

\DeclareMathOperator{\sing}{Sing}
\DeclareMathOperator{\sat}{Sat}

\usepackage{graphicx}

\def\petal{\Delta_{\eta,\delta}^{k}}

\def\N{\mathbb N}
\def\R{\mathbb R}
\def\C{\mathbb C}
\def\Q{\mathbb Q}

\def\P{\mathbb P}

\def\Cd#1{(\C^#1,0)}

\newcommand\parcial[1]{\dfrac{\partial}{\partial #1}}
\def\N{\mathbb N}
\def\R{\mathbb R}
\def\C{\mathbb C}
\def\Q{\mathbb Q}

\def\P{\mathbb P}

\def\Cd#1{(\C^#1,0)}

\def\parcial#1{\frac{\partial}{\partial #1}}
\def\Parcial#1#2{\frac{\partial #1}{\partial #2}}

\theoremstyle{plain}
\newtheorem{theorem}{Theorem}[section]

\theoremstyle{definition}

\newtheorem{remark}[theorem]{Remark}
\newcommand{\obra}[3]{{\sc #1} {\em #2}. {#3}.}

\author{Lorena L\'opez-Hernanz}
\address{Lorena L\'opez-Hernanz\\ Departamento de F\'isica y Matem\'aticas, Universidad de Alcal\'a}
\email{lorena.lopezh@uah.es}
\date{}
\thanks{Partially supported by Ministerio de Ciencia e Innovación, Spain, PID2022-139631NB-I00}
\title[Dynamics of tangent to the identity biholomorphisms in dimension 2]{Local dynamics of tangent to the identity biholomorphisms in dimension two}
\begin{document}
\maketitle
\begin{abstract}
This survey provides an overview of the existence of stable manifolds for tangent to the identity biholomorphisms in $(\C^2,0$) and its connection with the existence of invariant curves for formal vector fields.
\end{abstract}

\renewcommand{\abstractname}{Résumé}
\begin{abstract}
Ce survey présente un aperçu de l'existence de variétés stables pour les biholomorphismes tangents à l'identité dans $(\C^2,0)$
et de son lien avec l'existence de courbes invariantes pour les champs de vecteurs formels.
\end{abstract}

\section{Introduction}
This survey text is based on a short course given at the conference ``Foliations, birational geometry and applications'' at CIRM in February 2025. Our aim is to provide an overview of the existence of stable manifolds for germs of tangent to the identity biholomorphisms in $(\C^2,0)$ and its connection with the existence of invariant curves for germs of formal vector fields. Section~\ref{sec:invariantcurves-vf} is devoted to formal vector fields in $(\C^2,0)$: we recall Seidenberg's reduction of singularities, Briot-Bouquet theorem on the existence of invariant curves when the vector field has a simple singularity and Camacho-Sad theorem about the existence of an invariant curve for every formal vector field in $(\C^2,0)$. Sections \ref{sec:HakimAbate} and \ref{sec:LRRS-LR} focus on the dynamics of tangent to the identity biholomorphisms in $(\C^2,0)$. In Section~\ref{sec:HakimAbate} we recall Écalle and Hakim theorem, which guarantees the existence of 1-dimensional stable manifolds when the biholomorphism has a non-degenerate characteristic direction, and Abate theorem, that ensures the existence of 1-dimensional stable manifolds for any tangent to the identity biholomorphism in $(\C^2,0)$ with an isolated fixed point. We relate these theorems to the results discussed in Section~\ref{sec:invariantcurves-vf}. Finally, in Section~\ref{sec:LRRS-LR} we present two generalizations of Écalle-Hakim theorem obtained in collaboration with J. Raissy, J. Ribón, R. Rosas, F. Sanz-Sánchez and L. Vivas.

\section{Invariant curves for vector fields in $(\C^2,0)$}\label{sec:invariantcurves-vf}\label{sec:vectorfields}

In this section we recall some classical results about the reduction of singularities and the existence of invariant curves for formal vector fields in $(\C^2,0)$. More details for the contents of this section can be found for instance in \cite{Cam-S2}, \cite{Can-C-D} or \cite{Ily-Y}.

A formal vector field $X$ in $(\mathbb{C}^2,0)$ can be written, in a unique way up to multiplication by a unit, as 
$$X=A(x,y)\parcial x+B(x,y)\parcial y=f\left[a(x,y)\parcial x+b(x,y)\parcial y\right],$$ 
where $f,A,B,a,b\in\C[[x,y]]$ and $a$ and $b$ have no common factor. The \emph{order} of $X$ is defined as $\ord X=\min\{\nu_0(A),\nu_0(B)\}$. The vector field 
$$\sat X=a(x,y)\parcial x+b(x,y)\parcial y$$ 
is called the \emph{saturation} of $X$, and we say that $X$ is saturated if $f$ is a unit. We say that $X$ is \emph{singular} if $\ord X\ge1$, and that $X$ is \emph{strictly singular} if $\sat X$ is singular. If $X$ is not saturated, the \emph{singular locus} of $X$ is $\sing X=\sqrt{(f)}$; if $X$ is saturated, $\sing X=\{0\}$ if $X$ is singular and $\sing X=\emptyset$ otherwise. 
An irreducible formal curve $\Gamma=(g)$ in $(\mathbb{C}^2,0)$ is said to be a (formal) \emph{invariant curve} of $X$ if $X(g)\in\Gamma$. Equivalently, if $\gamma(t)=\left(\gamma_1(t),\gamma_2(t)\right)$ is a parametrization of $\Gamma$, we have that
$$\left(A(\gamma(t)),B(\gamma(t))\right)=h(t)\gamma'(t)$$
for some $h\in\C[[t]]$, i.e. 
$$A(\gamma_1(t),\gamma_2(t))\gamma_2'(t)=B(\gamma_1(t),\gamma_2(t))\gamma_1'(t).$$ 
Observe that the components of the singular locus of $X$ are invariant curves, and if $X$ is non-singular then its formal integral curve through the origin is its only invariant curve. 

Assume that $X$ is singular, let $\pi$ be the blow-up centered at 0 and denote by $E=\pi^{-1}(0)$ the exceptional divisor and by $\tilde X_p$ the transform of $X$ by $\pi$ at $p$ for any $p\in E$. We have that a formal curve $\Gamma$ is invariant for $X$ if and only if its strict transform $\tilde\Gamma$ by $\pi$ is invariant for $\tilde X_q$, where $\{q\}=\tilde\Gamma\cap E$. Denote $k+1=\ord X$, write 
$$A(x,y)=A_{k+1}(x,y)+A_{k+2}(x,y)+\cdots, \quad B(x,y)=B_{k+1}(x,y)+B_{k+2}(x,y)+\cdots,$$
where $A_j,B_j$ are homogeneous of degree $j$, and define
\begin{equation}\label{eq:tangentcone}
P_X(x,y)=xB_{k+1}(x,y)-yA_{k+1}(x,y).
\end{equation} 
If $p=[x:y]\in E$ corresponds to a zero of $P_X$, then $\ord \tilde X_p\ge\ord X$. When $P_X\equiv0$, we say that $X$ is \emph{dicritical}, otherwise it is \emph{non-dicritical}. If $X$ is dicritical, $E$ is invariant for $\tilde X_p$ but not for $\sat \tilde X_p$ for every $p\in E$ and if $p\in E$ is a generic point (corresponding to a direction $[x:y]$ such that $\left(A_{k+1}(x,y),B_{k+1}(x,y)\right)\neq0$) then $\sat \tilde X_p$ is non-singular, so $X$ has infinitely many formal invariant curves. If $X$ is non-dicritical, $E$ is invariant for $\sat\tilde X_p$ for every $p\in E$ and every zero of $P_X$ is either the tangent of an invariant curve contained in $\sing X$ or corresponds, after a blow-up, to a point $p$ where $\tilde X_p$ is strictly singular.

\subsection{Reduction of singularities}
We say that a saturated singular vector field $X$ in $(\mathbb{C}^2,0)$ has an \emph{elementary} singularity if its linear part
$$\left(\begin{array}{cc}\displaystyle \Parcial a x (0) & \displaystyle\Parcial a y(0) \\[7pt]
\displaystyle\Parcial b x (0) &\displaystyle\Parcial b y(0)\\ \end{array}\right)$$
is non nilpotent, and we say that it has a \emph{simple} singularity if the eigenvalues $\lambda,\mu$ of the linear part satisfy $\lambda\neq0$ and $\mu/\lambda\not\in\N^*\cup 1/\N$. We say that a simple singularity is \emph{non-degenerate} if $\lambda\mu\neq0$, otherwise we say that the singularity is a \emph{saddle-node}.

If $X$ has a simple singularity, then it is non-dicritical and after a blow-up we find exactly two points in the exceptional divisor where the transform of $X$ is strictly singular: both are elementary and at least one of them is simple.

\begin{remark}
Note that our definition of simple singularities is not the classical one: usually the condition required to have a simple singularity is that $\lambda\neq0$ and $\mu/\lambda\not\in \Q_{>0}$. The discrepancy between the two definitions is only technical and does not change the nature of the results. 
\end{remark}

\begin{theorem}[Seidenberg \cite{Sei}]\label{th:Seidenberg}
Given a formal vector field $X$ in $(\C^2,0)$, there is a map $\sigma:M\to\C^2$, composition of a finite number of blow-ups, such that for any $p\in\sigma^{-1}(0)$ we can find coordinates $(x,y)$ such that the germ of $\sigma^*X$ at $p$ is written as
$$(\sigma^*X)_p=x^my^n\tilde X$$
where $m\ge1$, $n\ge0$, $\ord (\sigma^*X)_p\ge\ord X$ and $\tilde X$ is either non-singular or has a simple singularity. Any such map is called a \emph{reduction of singularities} of $X$.
\end{theorem}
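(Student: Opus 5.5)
We plan the classical route to Seidenberg's theorem: blow up repeatedly, and show by induction that a suitable invariant of the singularity strictly drops until only simple (or regular) points remain.

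\textbf{Reductions.} First we reduce to saturated $X$. The factor $f$ is a product of powers of irreducible formal curves. Since blow-ups resolve the embedded curve $\sing X\cup\{xy=0\}$, finitely many blow-ups turn the total transform of $f$, together with the exceptional divisor, into normal crossings. We then work with $\sat X$ and keep track of the monomial factor. The pull-back of a singular $X$ always carries at least one factor of the exceptional divisor, which gives $m\ge1$. The inequality $\ord(\sigma^*X)_p\ge\ord X$ comes from the remark above: at zeros of $P_X$ the order does not drop. More generally, the total transform is divisible by the $(\ord X-1)$-th power of the exceptional equation, and $\ord(x^my^n\tilde X)\ge m+n$. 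In the dicritical case the saturated transform is non-singular at generic points of $E$. Only finitely many special points remain, and at each of them we recurse.

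\textbf{Induction invariant.} For a saturated singular $X$ we use the pair $(\nu,\ \text{Milnor number }\mu(X)=\dim\C[[x,y]]/(a,b))$, ordered lexicographically, with $\nu=\ord \sat X$. Two steps are needed.
\begin{enumerate}[(i)]
\item After one blow-up, the orders of $\sat\tilde X_p$ at the strictly singular points are at most $\nu$. If equality holds at some point, then either there is a unique such point or the configuration is very special. The standard computation shows that the Milnor numbers satisfy $\sum_p \mu(\sat\tilde X_p)<\mu(X)$ when $\nu\ge1$, because of a correction term of order $\nu^2-\nu-1$ (non-dicritical case). So $\mu$ strictly decreases at each infinitely near singular point whenever $\mu(X)$ is finite.
\item Since $X$ is saturated, $\mu(X)<\infty$. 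Hence after finitely many blow-ups every point has $\nu=1$, that is, an elementary or nilpotent linear part.
\end{enumerate}

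\textbf{Linear-part cases.} We then treat the cases according to the linear part.
\begin{itemize}
\item \emph{Nilpotent nonzero linear part} $y\partial_x+\cdots$. Blowing up produces a single singular point with order at most one. Several blow-ups (at most three, following the standard Puiseux-type argument on the Newton polygon) lead to elementary points.
\item \emph{Eigenvalues with $\mu/\lambda=n\in\N^*$ or $1/n$.} One blow-up at the point in the appropriate direction replaces the ratio $n$ by $n-1$. Iterating reaches ratio $1$, which is the dicritical-type case $\lambda=\mu$. This case is handled either by non-resonance after one more blow-up (which produces ratio $0$, that is, a saddle-node or regular point) or by dicriticalness, giving regular points.
\item \emph{Zero eigenvalue $\lambda=0$ with $\mu\neq0$.} This is already simple after swapping the roles of the eigenvalues.
\end{itemize}
At each stage the divisor germ $x^my^n$ stays normal-crossing, because we only blow up at points and the strict transforms of components of $E$ are smooth and transverse.

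\textbf{Main obstacle.} The hard step is (i): the strict decrease of the invariant, particularly in the case where the order does not drop and there is a single infinitely near singular point. There the Milnor number alone may not be monotone in a naive count. We plan to use the formula
$$\mu(X)=\sum_{p\in E}\mu(\sat\tilde X_p)+\nu^2-\nu-1+(\text{dicritical correction})$$
via intersection multiplicities of $a,b$ on the blow-up. For $\nu\ge2$ the correction term $\nu^2-\nu-1$ is positive, which gives the strict drop. The case $\nu=1$ is then handled by the explicit linear analysis above.
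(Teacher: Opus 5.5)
Your outline is the classical one that the paper defers to the references: resolve the curve $f=0$ first, then reduce the saturated field using Milnor numbers and a case analysis of the linear part. The parts for points of order $\nu\ge2$ (strict drop via the blow-up formula) and for resonant elementary points are fine. The gap is in the nilpotent case, which is exactly where the induction has to be closed, and your claims there are false.

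Write the singularity as $X=y\partial_x+B\,\partial_y$ with $\nu_0(B)\ge2$. One blow-up gives a unique singular point, at $v=0$ in the chart $y=xv$, where the transform is $xv\,\partial_x+\bigl(B(x,xv)/x-v^2\bigr)\partial_v$. Its order is $2$ as soon as $B$ has no $x^2$ term: $y\partial_x+x^3\partial_y$ becomes $xv\,\partial_x+(x^2-v^2)\partial_v$. So the order can exceed $\nu$, contrary to (i); the correct bound is $\nu+1$.

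The non-monotonicity you anticipate in your last paragraph does not occur for $\nu\ge2$; it occurs exactly here. For $\nu=1$ in the non-dicritical case your formula reads $\sum_p\mu_p=\mu(X)+1$. For the cusp $y\partial_x+x^2\partial_y$ the pair $(\nu,\mu)$ runs $(1,2)\to(1,3)\to(2,4)$, so your lexicographic invariant is not decreasing.

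The claim that at most three blow-ups lead to elementary points also fails. For $y\partial_x+x^{2j-1}\partial_y$, after $k<j$ blow-ups there is still a point where the saturated transform reads $xw\,\partial_x+(x^{2j-2k}-kw^2)\partial_w$, of order $2$ (take $j\ge4$). So you cannot first push everything down to $\nu\le1$ and then finish with the linear analysis. Nilpotent points regenerate order-$2$ points with larger Milnor number, and nothing in your plan excludes an endless alternation between the two regimes.

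What is missing is a lemma: starting at a nilpotent point, the Milnor number drops within a bounded number of blow-ups. Your formula gives it once the cases are separated correctly. Let $\mu$ be the Milnor number of $X$ and write $B=b_{20}x^2+b_{11}xy+b_{02}y^2+b_{30}x^3+\cdots$.

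If $b_{20}=0$, the new point has order $2$ and Milnor number $\mu+1$. Its polynomial $P=x\,(b_{30}x^2+b_{11}xv-2v^2)$ has at least two distinct zeros. Its blow-up is therefore non-dicritical, with at least two singular points whose Milnor numbers add up to $\mu$.

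If $b_{20}\neq0$, the new point is again nilpotent, with Milnor number $\mu+1$. Blowing it up gives, in the chart $x=uv$, a unique point with $2$-jet $(2uv-b_{20}u^2)\partial_u+(b_{20}uv-v^2)\partial_v$ and Milnor number $\mu+2$. Its $P=uv(2b_{20}u-3v)$ has three distinct zeros, so its blow-up yields three singular points with Milnor numbers adding up to $\mu+1$.

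In both cases, after at most three blow-ups every singular point has Milnor number at most $\mu-1$. A single induction on the Milnor number, not on the lexicographic pair, then closes the proof:
\begin{itemize}
\item points of order $\ge2$ drop after one blow-up;
\item nilpotent points drop after at most three;
\item non-simple elementary points have Milnor number $1$, produce only elementary or regular points, and are handled by your ratio argument.
\end{itemize}

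There are also two smaller issues in your reduction paragraph.
\begin{itemize}
\item Divisibility by the $(\ord X-1)$-th power of the exceptional equation gives $m\ge1$ only when $\ord X\ge2$ or the first blow-up is dicritical. For $x\partial_x+\sqrt2\,y\partial_y$ no exceptional factor ever appears.
\item The same divisibility gives only $\ord(\sigma^*X)_p\ge\ord X-1$, and this is attained at generic points of a non-dicritical first exceptional component. For example, $x^2\partial_x+y^2\partial_y$ pulls back to $x\bigl[x\partial_x+(v^2-v)\partial_v\bigr]$, which has order $1$ at $v=1/2$. So $\ord(\sigma^*X)_p\ge\ord X$ can only be obtained where $\tilde X$ is singular, from $m+n\ge\ord X-1$. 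This is the form used later in the paper ($m+1\ge\ord X$).
\end{itemize}
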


The proof of Seidenberg theorem can also be found in \cite{Mat-M}, \cite{Cam-S2} or \cite{Can-C-D}, for instance. Note that in those references the reduction of singularities is carried out for saturated vector fields; the result stated here can be easily obtained from that case up to some additional blow-ups to handle the singular locus.

\subsection{Invariant curves at simple singularities}

\begin{theorem}[Briot, Bouquet \cite{Bri-B}]\label{th:Briot-Bouquet}
If $X$ has a simple singularity, then $X$ has exactly two non-singular formal invariant curves $\Gamma_1,\Gamma_2$ which are moreover tangent to the eigendirections of the linear part of $X$. Moreover, if $X$ is analytic and the eigenvalue corresponding to the tangent of $\Gamma_i$ is non-zero then $\Gamma_i$ is analytic.
\end{theorem}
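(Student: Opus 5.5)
Since $X$ has a simple singularity, it is saturated and its linear part has eigenvalues $\lambda\neq 0$ and $\mu$ with $\mu/\lambda\notin\N^*\cup 1/\N$. In particular $\mu\neq\lambda$, so the linear part is diagonalizable. After a linear change of coordinates I write
$$X=(\lambda x+\cdots)\parcial x+(\mu y+\cdots)\parcial y.$$
The plan has four steps: construct a formal invariant curve tangent to each axis, prove these are the only non-singular invariant curves, and finally address convergence.

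\textbf{Existence.} For the curve tangent to the $x$-axis, I look for $y=\varphi(x)=\sum_{j\ge2}c_jx^j$. The invariance equation reads
$$B(x,\varphi(x))=\varphi'(x)\,A(x,\varphi(x)).$$
Comparing coefficients of $x^j$ gives
$$(j\lambda-\mu)\,c_j=\text{(polynomial in } c_2,\dots,c_{j-1}\text{ and the coefficients of } A,B).$$
The factor $j\lambda-\mu$ never vanishes, because $\mu/\lambda\notin\N^*$. So the $c_j$ are determined recursively and uniquely.

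For the curve tangent to the $y$-axis, I look for $x=\psi(y)$ and get the factor $j\mu-\lambda$ multiplying the $j$-th coefficient. This is nonzero because $\lambda/\mu\notin\N^*$, which is the condition $\mu/\lambda\notin 1/\N$; if $\mu=0$ it is $-\lambda\neq0$. Again the solution is unique.

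\textbf{Uniqueness.} I must show there are no other non-singular invariant curves. A non-singular curve through $0$ has a tangent direction $[u:v]$. Invariance forces $(u,v)$ to be an eigenvector of the linear part: substituting a parametrization into the invariance equation, the lowest-order term gives $(\lambda u,\mu v)\parallel(u,v)$. Since $\lambda\neq\mu$, the curve is tangent to one of the two axes. It is then a graph over that axis, and the recursion above shows the graph is unique. This gives exactly two curves, $\Gamma_1,\Gamma_2$.

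\textbf{Convergence (main obstacle).} Suppose $X$ is analytic and the eigenvalue for the tangent of $\Gamma_i$ is nonzero. Take $\Gamma$ tangent to the $x$-axis, whose eigenvalue is $\lambda\neq0$. Since $\lambda\ne0$, we have $|j\lambda-\mu|\ge c\,j$ for some $c>0$ and all $j\ge2$. This holds because $j\lambda-\mu$ never vanishes and $|j\lambda-\mu|/j\to|\lambda|$, so there are no small divisors.

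I then use the method of majorants. Rewrite the equation as
$$\lambda x\varphi'-\mu\varphi=F(x,\varphi,\varphi'),$$
where $F$ collects the higher-order terms. The bound $|j\lambda-\mu|\ge cj$ absorbs the factor $j$ coming from $\varphi'$. The coefficients $|c_j|$ are then dominated by those of the solution of an analytic implicit equation $\Phi=\tfrac1c G(x,\Phi)$ with $G(0,0)=0$ and $\partial_\Phi G(0,0)=0$, where $G$ majorizes the nonlinearity. The implicit function theorem makes $\Phi$ convergent, so $\varphi$ converges.

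The delicate point is that $F$ contains the term $\varphi'$ times higher-order terms of $A$. I handle this by dividing the invariance equation by $A(x,\varphi)/x=\lambda+O(x,\varphi)$, which is a unit, before majorizing. The equation becomes $x\varphi'=xB(x,\varphi)/A(x,\varphi)$. The right-hand side is analytic in $(x,\varphi)$ and has the form $(\mu/\lambda)\varphi+O(2)$, and $\varphi'$ appears only on the left.

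If instead $\lambda\mu\neq0$, the same argument with the roles of the axes swapped handles $\Gamma_2$. When $\mu=0$, convergence of the curve tangent to the $\mu$-eigendirection is not claimed, and it indeed fails in general (Euler's example).
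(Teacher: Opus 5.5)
Your existence and uniqueness arguments are correct and agree with the paper's. Your tangent-direction computation makes explicit what the paper gets from $(\lambda-\mu)c_1=0$.

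The gap is in the convergence step, exactly where you locate the delicate point. Write $A=\lambda x+a$, $B=\mu y+b$ with $\nu_0(a),\nu_0(b)\ge2$. You claim that $A(x,y)/x=\lambda+O(x,y)$ is a unit and that $xB(x,y)/A(x,y)$ is holomorphic in two variables. This is false in general:
\begin{itemize}
\item $A/x$ is holomorphic only if $x$ divides $A$, i.e.\ only if the $y$-axis is already invariant. You cannot assume this. For $\mu=0$ you cannot even arrange it by an analytic change of coordinates, since that curve may diverge.
\item For example, $A=\lambda x+y^2$, $B=\mu y$ gives $xB/A=\mu xy/(\lambda x+y^2)$, which has an indeterminacy point at the origin.
\item Along $y=\varphi(x)$ with $\varphi=O(x^2)$ the quotient does make sense as a series in $x$. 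But the majorant method needs a holomorphic $G(x,\Phi)$ of two variables dominating the nonlinearity, and no such $G$ exists here.
\item Keeping $\varphi'$ on the right and using $|j\lambda-\mu|\ge cj$ does not rescue this. The majorant equation would then contain $x^{-1}\Phi\,\hat a(x,\Phi)$, which has the same defect (e.g.\ a term $\Phi^3/x$).
\end{itemize}

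The missing idea is the substitution $\varphi(x)=x\psi(x)$, i.e.\ the blow-up $y=xv$ that the paper performs before majorizing.
\begin{itemize}
\item The functions $\tilde a(x,v)=x^{-1}a(x,xv)$ and $\tilde b(x,v)=x^{-1}\bigl(b(x,xv)-va(x,xv)\bigr)$ are genuinely holomorphic, with $\tilde a(0,0)=0$. So $\lambda+\tilde a$ is a true unit.
\item Dividing by it turns the invariance equation into $x\psi'=(\mu/\lambda-1)\psi+h(x,\psi)$, with $h$ holomorphic, $h(0,0)=0$ and $\partial_vh(0,0)=0$.
\item Comparing coefficients of $x^n$, $(n+1-\mu/\lambda)c_{n+1}$ equals a polynomial with nonnegative coefficients in the $h_{ij}$ and $c_2,\dots,c_n$.
\item So you only need the divisors to be bounded below by $\gamma=\inf_{n\ge1}|n+1-\mu/\lambda|>0$; they need not grow like $n$.
\item The implicit function theorem applied to $\gamma v=\sum|h_{ij}|x^iv^j$ then gives a convergent majorant, and induction gives $|c_{n+1}|\le v_n$.
\end{itemize}
With this change your argument goes through, and symmetrically for $\Gamma_2$ when $\mu\neq0$.
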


We say that $\Gamma_i$ is \emph{strong} if it is tangent to the eigenspace of a non-zero eigenvalue, otherwise we say that it is \emph{weak}. 

\begin{proof}
Let $\lambda, \mu$ be the eigenvalues of the linear part of $X$. Since $\lambda\ne\mu$, up to a linear change of coordinates we can write
$$X=\left(\lambda x+a(x,y)\right)\parcial x+\left(\mu y+b(x,y)\right)\parcial y,$$
with $\nu_0(a),\nu_0(b)\ge2$. We look for a formal invariant curve of $X$ of the form
$$\Gamma_1=\biggl(y-\sum_{n\ge1} c_nx^n \biggr).$$
$\Gamma_1$ being invariant for $X$ means that
\begin{equation}\label{eq:invariance}
\biggl(\lambda x+a\Bigl(x,\sum c_nx^n\Bigr)\biggr)\sum nc_nx^{n-1}=\mu\sum c_nx^n+ b\Bigl(x,\sum c_nx^n\Bigr).
\end{equation}
Identifying the coefficients of $x^n$ in this equation we obtain $(\lambda-\mu)c_1=0$ and
$$(\lambda n-\mu)c_n=P_n(c_1,...,c_{n-1}),$$
for every $n\ge2$, where $P_n$ is a polynomial that depends only on $a$ and $b$. Since $\lambda n-\mu\neq0$ for all $n\ge1$, because $X$ has a simple singularity, there exists a unique formal solution of equation~\eqref{eq:invariance} with $c_1=0$. Analogously, if we look for a formal invariant curve of $X$ of the form
$$\Gamma_2=\biggl(x-\sum_{n\ge1} d_ny^n \biggr),$$
we obtain $(\mu-\lambda)d_1=0$ and 
$$(\mu n-\lambda)d_n=Q_n(d_1,...,d_{n-1}),$$
for every $n\ge2$ and for some polynomial $Q_n$ that only depends on $a$ and $b$. Since $\mu n-\lambda\neq0$ for all $n\ge1$, we find a unique formal solution with $d_1=0$.

Assume now that $X$ is analytic and $\lambda\neq0$ and let us show that $\Gamma_1$ is analytic. Let $\pi$ be the blow-up at 0 and $\{p\}=\pi^{-1}(0)\cap\tilde\Gamma_1$, where $\tilde\Gamma_1$ is the strict transform of $\Gamma_1$. In coordinates $(x,v)$ at $p$ we have that $\tilde\Gamma_1=\left(v-\sum_{n\ge1} c_{n+1}x^n\right)$ and
$$\left(\pi^*X\right)_p=x\left(\lambda+\tilde a(x,v)\right)\parcial x+\left((\mu-\lambda)v+\tilde b(x,v)\right)\parcial v,$$
where $\tilde a(x,v)=x^{-1}a(x,xv)$ and $\tilde b(x,v)=x^{-1}(b(x,xv)-va(x,xv))$ so $\nu_0(\tilde a),\nu_0(\tilde b)\ge1$ and $\Parcial{\tilde b}{v}(0)=0$. In these coordinates, the invariance equation for $\tilde\Gamma_1$ is
$$\biggl(\lambda +\tilde a\Bigl(x,\sum c_{n+1}x^n\Bigr)\biggr)\sum nc_{n+1}x^n=(\mu-\lambda)\sum c_{n+1}x^n+ \tilde b\Bigl(x,\sum c_{n+1}x^n\Bigr).$$
Since $\lambda\neq0$, we have
$$\sum nc_{n+1}x^n=\frac{\mu-\lambda}\lambda \sum c_{n+1}x^n+h\Bigl(x,\sum c_{n+1}x^n\Bigr)$$
where $h(x,v)=(\mu/\lambda-1)v\left[(1+\tilde a(x,v)/\lambda)^{-1}-1\right]+\tilde b(x,v)(\lambda+\tilde a(x,v))^{-1}$ is analytic. If we write 
$$h(x,v)=\sum_{i+j\ge1\atop (i,j)\neq(0,1)}h_{i,j}x^iv^j$$ 
we have that
$$(n+1-\mu/\lambda)c_{n+1}=\sum_{i+j\ge1, \, (i,j)\neq(0,1)\atop i+n_1+\cdots+n_j=n} h_{i,j}c_{n_1+1}...c_{n_j+1}$$
for every $n\ge1$. Set $\gamma=\inf\{|n+1-\mu/\lambda|:n\in\N^*\}>0$. Since $\gamma\neq0$, by the implicit function theorem the equation 
$$\gamma v=\sum_{i+j\ge1\atop (i,j)\neq(0,1)}|h_{i,j}|x^iv^j$$
has a unique analytic solution of the form $v=\sum_{n\ge1}v_nx^n$, where
$$\gamma v_n=\sum_{i+j\ge1, \, (i,j)\neq(0,1)\atop i+n_1+\cdots+n_j=n} |h_{i,j}|v_{n_1}...v_{n_j}.$$
By induction we have that $|c_{n+1}|\le v_n$, so $\tilde\Gamma_1$ is analytic and hence so is $\Gamma_1$. Analogously, if $X$ is analytic and $\mu\neq0$ we have that $\Gamma_2$ is analytic.
\end{proof}

\begin{remark}
The invariant curves provided by Theorem~\ref{th:Briot-Bouquet} are not the only invariant curves of $X$ in general. For instance, the vector field 
$$X=x\parcial x+\frac pq y\parcial y$$
has $(y^q-cx^p)$ as invariant curve for any $c\in\C$. 
\end{remark}

\begin{remark}
Weak invariant curves are divergent in general even when $X$ is analytic: for instance, the weak invariant curve for Euler's vector field
$$X=x^2\partial_x+(y-x)\partial_y$$
is $\Gamma=\left(y-\sum_{n\ge0} n!x^{n+1}\right)$, which is not analytic. However, as was proved in \cite{Mar-R}, weak invariant curves are always \emph{summable} in the sense of Borel-Laplace summability \cite{Ram} (see also \cite{Bal}), which implies that we can find an analytic function defined in a sector of wide opening which is invariant for $X$ and has the formal invariant curve as Gevrey asymptotic expansion. 
\end{remark}

\begin{remark}\label{rem:Poincare-Dulac}
What happens in the case where $X$ has an elementary singularity which is not simple? Assume, with the notations in the proof of Theorem~\ref{th:Briot-Bouquet}, that $\mu=k\lambda$, with $k\in\N^*$ and $\lambda\neq0$. From the proof of Theorem~\ref{th:Briot-Bouquet} we see that there is a unique invariant curve of the form $\Gamma=\bigr(x-\sum_{n\ge2} d_n y^n\bigl)$, analytic if $X$ is analytic (in case $k=1$ we have to make a slight change in the proof and include a term of the form $\beta x$ in the second component of $X$, since the linear part of $X$ might be non-diagonalizable). However equation~\eqref{eq:invariance} does not have a solution if $P_k(c_1,...,c_{k-1})\neq0$ (and has an infinite number of them otherwise). Let us show that we can find anyhow a formal transeries different from $\Gamma$ which is invariant for $X$ (see \cite[Chapitre III]{Huk-K-M}). First, up to a formal change of coordinates $(x,y)\mapsto(t,u)=(x-\sum d_ny^n,y)$ (which is analytic if $X$ is analytic), we can assume that $\Gamma=(t)$, so $X$ can be written in the coordinates $(t,u)$ as
$$X=t\left(\lambda+\tilde a(t,u)\right)\parcial t+\left(k\lambda u+\beta t+\tilde b(t,u)\right)\parcial u,$$
with $\nu_0(\tilde a)\ge1$, $\nu_0(\tilde b)\ge2$ and $\beta=0$ if $k\neq1$. Then, finding a formal invariant curve of $X$ is equivalent to finding a formal invariant curve of 
$$\tilde X=t\parcial t+\left(k u+\frac{\beta}{\lambda}t+ \hat b(t,u)\right)\parcial u$$
where $\hat b(t,u)=\tilde b(t,u)(\lambda+\tilde a(t,u))^{-1}+(ku+\beta t/\lambda)\left((1+\tilde a(t,u)/\lambda)^{-1}-1\right)$. By Poincaré-Dulac theorem we know that $\tilde X$ is formally conjugate by a change of coordinates of the form $(t,u)\mapsto (z,w)$ with $t=z$, $u=w+a_{1,0}z+\sum_{i+j\ge2}a_{i,j}z^iw^j$, where $a_{1,0}=0$ if $\beta=0$, to 
$$z\parcial z+\left(k w+az^k\right)\parcial w$$
for some $a\in\C$; moreover, since $\lambda,\mu$ are in the Poincaré domain the conjugation is analytic if $X$ is analytic (see \cite{Dul}). If $a=0$, we find infinitely many invariant curves of the form $\left(w-cz^k\right)$ for $c\in\C$. If $a\neq0$, we have that $w=a z^k\log z+cz^k$ is invariant for every $c\in\C$, so in the coordinates $(t,u)$ we find infinitely many invariant power-logarithmic series (also called Dulac series) of $\tilde X$ of the form
$$\Bigl(u-\sum_{i+j\ge1} c_{i,j}t^i (at^k\log t+ct^k)^j\Bigr),$$
which are analytic if $\tilde X$ is analytic.
\end{remark}

\subsection{Existence of invariant curves}

\begin{theorem}[Camacho, Sad \cite{Cam-S}]\label{th:Camacho-Sad}
Let $X$ be a singular saturated formal vector field in $(\C^2,0)$. There exist a sequence $\sigma$ of blow-ups and a point $p\in\sigma^{-1}(0)$ satisfying the following: there are some coordinates $(x,y)$ at $p$ such that $\sigma^{-1}(0)=(x=0)$ and
\begin{equation}\label{eq:Camacho-Sad}
(\sigma^*X)_p=x^m\left[(\lambda x+a(x,y))\parcial x+(\mu y+b(x,y))\parcial y\right]
\end{equation}
where $\lambda\neq0$, $\mu/\lambda\not\in\N^*\cup 1/\N$, $\nu_0(a),\nu_0(b)\ge2$ and $m+1\ge \ord X$.
\end{theorem}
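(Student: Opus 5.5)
The plan is to reduce to the combinatorics of the exceptional divisor of a reduction of singularities. We apply Theorem~\ref{th:Seidenberg} to $X$ and get $\sigma:M\to\C^2$. At every $p\in\sigma^{-1}(0)$ we then have $(\sigma^*X)_p=x^my^n\tilde X$ with $\tilde X$ non-singular or simple, and $\ord(\sigma^*X)_p\ge\ord X$. Since $X$ is saturated, the factor $x^my^n$ only contains equations of divisor components. So we must find a \emph{non-corner} point $p$ of $E=\sigma^{-1}(0)$ with $E=(x=0)$ near $p$ where two things hold:
\begin{itemize}
\item $\tilde X$ has a simple singularity and $E$ is one of its Briot--Bouquet curves (Theorem~\ref{th:Briot-Bouquet});
\item the eigenvalue $\lambda$ attached to the direction transverse to $E$ is nonzero.
\end{itemize}
Diagonalizing the linear part (possible since $\lambda\ne\mu$), with $E$ kept as $(x=0)$, gives exactly \eqref{eq:Camacho-Sad}. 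The bound $m+1\ge\ord X$ then follows from $\ord(\sigma^*X)_p=m+1$, because $\tilde X$ has order one at a singular point.

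First we treat the dicritical case: some component $D\subset E$ is not invariant for the saturated transform. Pick a generic non-corner point $q\in D$ where $\sat\sigma^*X$ is regular and transverse to $D$. In local coordinates it reads $x^m\partial_x$ with $D=(x=0)$. Blowing up $q$ once, the chart $(x,v)$ with $y=xv$ gives $x^{m}(x\partial_x-v\partial_v)$ up to a unit, where the new divisor is $(x=0)$. At the origin of this chart (a non-corner point of the new divisor) we have $\lambda=1$, $\mu=-1$ and $\mu/\lambda=-1\notin\N^*\cup1/\N$, as required. The order condition still holds, since blowing up only raises the exponent of the divisor.

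The main case is when every component of $E$ is invariant. Here I would use the Camacho--Sad index theorem, in a formal version. For an invariant curve $D$ and a point $p\in D$, define $\cs(D,p)$ as the residue of the relevant logarithmic $1$-form; this makes sense for formal fields because only finitely many jets enter. The theorem says $\sum_{p\in D}\cs(D,p)=D\cdot D$. At a simple point where $D$ is a Briot--Bouquet curve, with $\lambda$ the transverse eigenvalue and $\mu$ the tangential one, $\cs(D,p)=\mu/\lambda$ if $\lambda\ne0$. If $\lambda=0$, that is a saddle-node with $D$ as strong curve, then $\cs(D,p)=0$. At a corner we have $\cs(D,p)\,\cs(D',p)=1$ in the non-degenerate case, and one index vanishes in the saddle-node case.

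We argue by contradiction: suppose no non-corner singular point on any component has $\lambda\ne0$. Then on each component, the only nonzero indices come from corners. Consider the dual tree of $E$ and its self-intersections: they are all $\le -1$, and the last blown-up component has $-1$. A terminal component $D$ (a leaf) carries a single corner $p$, so $\cs(D,p)=D\cdot D<0$. Propagating along the tree via $\cs(D',p)=1/\cs(D,p)$, one shows inductively that the index transferred toward the interior is always negative and of modulus $\le1$. This is the classical argument of Camacho and Sad. It forces a contradiction at some component, where the sum of indices cannot equal its self-intersection.

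I expect this combinatorial induction on the tree to be the main obstacle. One must order the components correctly, for instance by blow-up history, and handle saddle-node corners, where an index is $0$ and the product rule fails. The other delicate point is justifying the index formula for merely formal $X$; I would truncate at a high enough jet, since the indices and the reduction only depend on a finite jet.
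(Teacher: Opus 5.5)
\paragraph{The gap: the non-dicritical case.} Your reduction is sound. It suffices to find a non-corner point of $E$ where the saturated transform is simple, $E$ is invariant, and the eigenvalue transverse to $E$ is nonzero. Your dicritical case also works, with one correction: $x^m\partial_x$ pulls back to $x^{m-1}(x\partial_x-v\partial_v)$, so the exponent drops by one. The inequality $m'+1\ge\ord X$ survives only because the order $m$ is preserved.

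The gap is the non-dicritical case, which is the actual content of the theorem. You defer it to ``the classical argument'', and the mechanism you sketch does not work. The bound ``negative and of modulus $\le1$'' does not propagate. Blow up $0$, then a point of $E_1$, then a point of $E_2$ off $E_1$, then the corner $E_2\cap E_3$. This gives a chain $E_1,E_2,E_4,E_3$ with self-intersections $-2,-3,-1,-2$. Starting from the leaf $E_3$, you get $\cs(E_4,E_3\cap E_4)=-1/2$, hence $\cs(E_4,E_2\cap E_4)=-1-(-1/2)=-1/2$. The index passed into the interior component $E_2$ is therefore $-2$. Even negativity is not a local fact: at a vertex $v$, the outgoing index is $D_v\cdot D_v$ minus a sum of negative numbers. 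The ingredient your sketch never uses is the negative definiteness of the intersection matrix $M$ of $E$.

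Saddle-node corners, which you only flag, are a second real hole. There the product rule is replaced by an arbitrary value on the weak side, so propagation stops.

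\paragraph{How to close both holes.} Call \emph{blocks} the connected pieces of the dual tree after deleting the saddle-node corners. Orient each saddle-node corner from the block containing its weak separatrix to the block containing its strong one. The block graph is a tree, so following oriented edges you must stop at a block $B$ that carries the strong side, with index $0$, at every one of its saddle-node corners.

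All corners inside $B$ are non-degenerate. Fix $D_0$ in $B$, put $t_0=1$, and set $t_w=\cs(D_v,D_v\cap D_w)\,t_v$ along edges; this is consistent because the two indices at a corner multiply to $1$. Under your contradiction hypothesis, the index formula on each $D_v$ in $B$ becomes $(D_v\cdot D_v)\,t_v=\sum_{w\sim v}t_w$. With alternating signs $s_v=\pm t_v$ on the bipartite tree, this says $M_Bs=0$ with $s\ne0$. That contradicts the negative definiteness of the principal submatrix $M_B$.

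Separately, with the paper's convention the index at a simple point with transverse eigenvalue $\lambda$ and tangential eigenvalue $\mu\neq0$ is $\lambda/\mu$, not $\mu/\lambda$. You never use this, so it does no harm.

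\paragraph{Comparison with the paper.} Even once repaired, your route needs two global inputs. One is the index theorem along compact components, which for formal fields requires your truncation argument. The other is the negative definiteness of $M$. The paper, following Cano, avoids both. It tracks a single chain of infinitely near points and keeps the purely local condition $(\star)$ alive using properties (i) and (ii). At the simple point reached via Seidenberg's theorem, (iii) rules out $(\star_2)$, and $(\star_1)$ forces the separatrix transverse to the divisor to be strong.
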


\begin{remark}
Theorem~\ref{th:Camacho-Sad} guarantees the existence of a formal invariant curve of any formal vector field $X$, which is analytic in case $X$ is analytic. If $X$ is non-singular or if $X$ is not saturated, the result is clear; otherwise, Theorem~\ref{th:Camacho-Sad} guarantees the existence of a formal invariant curve $\tilde\Gamma$ of $(\sigma^*X)_p$ which is transverse to $\sigma^{-1}(0)$ and analytic if $X$ is analytic: if $\sat\bigl((\sigma^*X)_p\bigr)$ is non-singular then $\tilde\Gamma$ is the integral curve of $\sat\bigl((\sigma^*X)_p\bigr)$ at $p$, otherwise $\sat\bigl((\sigma^*X)_p\bigr)$ has a simple singularity, and the existence of $\tilde\Gamma$ follows from Theorem~\ref{th:Briot-Bouquet}. Since $\tilde\Gamma\neq\sigma^{-1}(0)$, the blow-down of $\tilde\Gamma$ is a formal invariant curve of $X$.
\end{remark}

\begin{proof}[Proof of Theorem~\ref{th:Camacho-Sad}]
The proof uses the Camacho-Sad index of a vector field relative to a formal invariant curve. Let $Y$ be a singular formal vector field in $(\C^2,0)$ and assume that $\sat Y$ has a non-singular invariant curve $\Gamma$. Up to a formal change of coordinates, we can assume that $\Gamma=(x)$, so we can write
$$\sat Y=x a(x,y)\parcial x+b(x,y)\parcial y.$$
The Camacho-Sad index of $Y$ relative to $\Gamma$ is defined as
$$\cs(Y,\Gamma)=\res_0\left(\frac{a(0,y)}{b(0,y)}\right).$$

This index satisfies the following properties:
\begin{enumerate}[(i)]
\item If $Y$ is non-dicritical and $\pi$ is the blow-up at $0$, then
$$\sum_{p\in\pi^{-1}(0)} \cs\left((\pi^*Y)_p,\pi^{-1}(0)\right)=-1.$$
\item If $\pi$ is the blow-up at $0$, $\tilde\Gamma$ is the strict transform of $\Gamma$ and $\{p\}=\tilde\Gamma\cap \pi^{-1}(0)$, then
$$\cs\bigl((\pi^*Y)_p,\tilde\Gamma\bigr)=\cs(Y,\Gamma)-1.$$
\item Assume that $\sat Y$ has a simple singularity. If it is non-degenerate and $\Gamma_1, \Gamma_2$ are the strong formal invariant curves then
$$\cs(Y,\Gamma_1)\cs(Y,\Gamma_2)=1;$$
if it is a saddle-node and $\Gamma$ is the strong formal invariant curve then 
$$\cs(Y,\Gamma)=0.$$
\end{enumerate}

Instead of the original proof of Camacho and Sad, which is based on a combinatorial argument on the reduction of singularities, we present here an alternative constructive proof due to Cano \cite{Can} (for another constructive proof, that makes use of the Newton-Puiseux polygon, see \cite{Can2}).

Consider a singular formal vector field $Y$ in $(\C^2,p)$ and let $D$ be a normal crossing divisor. We say that the pair $(Y,D)$ satisfies property $(\star)$ if one of the following conditions holds:
\begin{enumerate}
\item[$(\star_1)$] $p$ belongs to a unique component $S$ of $D$ which is invariant for $\sat Y$ and
$$ \cs(Y,S)\not\in\Q_{\ge 0}.$$
\item[$(\star_2)$] $p$ belongs to two components $S_1$ and $S_2$ of $D$ which are invariant for $\sat Y$ and there exists a real number $a>0$ such that
$$\cs(Y,S_1)\in\Q_{\le -a}\quad\text{and}\quad \cs(Y,S_2)\not\in\Q_{\ge -1/a}.$$
\end{enumerate}

Using property (i) of the Camacho-Sad index, it is easy to see that if $Y$ is a non-dicritical singular vector field in $(\C^2,0)$ and $\pi$ is the blow-up at 0 then there exists a point $p\in\pi^{-1}(0)$ such that $((\pi^*Y)_p,\pi^{-1}(0))$ satisfies property $(\star_1)$. Moreover, if $(Y,D)$ satisfies property $(\star)$ at $p$, $Y$ is non-dicritical and $\pi$ is the blow-up at $p$, then it can be shown using properties (i) and (ii) that there exists a point $q\in\pi^{-1}(p)$ such that $((\pi^*Y)_q,\pi^{-1}(D))$ satisfies property $(\star)$. 

To prove Theorem~\ref{th:Camacho-Sad} we proceed in the following way. Let $X$ be a singular saturated formal vector field and let $\pi_1$ be the blow-up at 0; if $X$ is dicritical, then we are done considering a generic point in the exceptional divisor, otherwise we find a point $p_1\in\pi_1^{-1}(0)$ such that 
$$(X_1,D_1)=\bigl((\pi_1^*X)_{p_1},\pi_1^{-1}(0)\bigr)$$ 
satisfies property $(\star_1)$. Let $\pi_2$ be the blow-up at $p_1$: if $X_1$ is dicritical then we are done as in the previous step, otherwise we find a point $p_2\in\pi_2^{-1}(p_1)$ such that 
$$(X_2,D_2)=\left((\pi_2^*X_1)_{p_2},\pi_2^{-1}(D_1)\right)$$ satisfies property $(\star)$. We continue this process: if at some step we find a dicritical vector field we are done, otherwise by Theorem~\ref{th:Seidenberg} we find a point $p_n$ such that $(X_n,D_n)$ satisfies property $(\star)$ and $X_n$ has a simple singularity at $p_n$. But property (iii) guarantees that in this case $(X_n,D_n)$ cannot satisfy $(\star_2)$, so $(\star_1)$ holds and moreover the singularity is either non-degenerate or a saddle-node with strong separatrix transverse to $D_n$, so we can find coordinates $(x,y)$ at $p_n$ such that $X_n$ is written as in~\eqref{eq:Camacho-Sad}.
\end{proof}

\section{Parabolic curves for biholomorphisms in $\C^2$: Écalle-Hakim and Abate theorems}\label{sec:HakimAbate}
In this section and the next one we will be concerned with the problem of finding stable manifolds for tangent to the identity biholomorphisms in $(\C^2,0)$. Before introducing the definitions and stating the first results, let us recall the description of the local dynamics of tangent to the identity biholomorphisms in $(\C,0)$ given by Leau-Fatou flower theorem \cite{Lea,Fat} (see also \cite{Mil} or \cite{Lor}).
Given $k\ge1$, $\eta\in(0,\pi/2)$ and $\delta>0$, we denote by $\petal$ the set
$$\petal=\hspace{-5pt}\mathop{\bigcup}\limits_{-\eta\le \theta\le\eta}\hspace{-5pt}\left\{x\in\C:|x^k-\delta e^{i\theta}|<\delta\right\}.$$
It has $k$ connected components that are fitted in sectors of opening $(\pi+2\eta)/k$ bisected by the half-lines $\{x\in\C:x^k\in\R^+\}$. Consider now a tangent to the identity biholomorphism $f$ in $(\C,0)$ with $f\neq\id$. We can write $f(x)=x+ax^{k+1}+O(x^{k+2})$ for some $a\neq0$ and some $k\ge1$, and up to a change of variable of the form $x\mapsto (-a)^{1/k}x$ we can assume that $a=-1$. 
\begin{theorem}[Leau \cite{Lea}, Fatou \cite{Fat}]\label{th:LeauFatou}
Consider a tangent to the identity biholomorphism $f(x)=x-x^{k+1}+O(x^{k+2})$ in $(\C,0)$. Given $\eta\in(0,\pi/2)$, there 
exists $\delta_0>0$ such that for every $\delta\le \delta_0$ and every component $\Delta$ of $\petal$ we have
$$f(\Delta)\subset\Delta\quad\text{ and }\quad f^n(p)\to0 \,\,\text{ for every } p\in\Delta.$$
Moreover, $kn(f^n(p))^k\to 1$ for every $p\in\Delta$, so every orbit in $\Delta$ converges tangentially to the half-line bisecting $\Delta$. 
\end{theorem}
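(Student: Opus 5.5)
The plan is to pass to the coordinate $w=x^{-k}$ (on one component at a time), where $f$ becomes close to the translation $w\mapsto w+k$. Then invariance of the petal and convergence to $0$ follow from elementary estimates on the real part of $w$.

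\textbf{Step 1: the conjugated map.} Fix a component $\Delta$ of $\petal$; the map $x\mapsto x^k$ sends it injectively onto the union of discs $D_\theta=\{|u-\delta e^{i\theta}|<\delta\}$, $|\theta|\le\eta$. The inversion $u\mapsto 1/u$ maps $D_\theta$ onto the half-plane $H_\theta=\{\re(e^{i\theta}w)>1/(2\delta)\}$. So $\Delta$ corresponds, via $w=x^{-k}$, to
$$
H=\bigcup_{|\theta|\le\eta}H_\theta ,
$$
a sector-like region of opening $\pi+2\eta$ around $\R^+$, at distance of order $1/\delta$ from $0$. We compute
$$
F(w)=f(x)^{-k}=x^{-k}\bigl(1-x^k+O(x^{k+1})\bigr)^{-k}=w+k+O(|w|^{-1/k}),
$$
which is valid on $H$ for $\delta$ small (so $|x|$ small). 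Hence $|F(w)-w-k|\le C|w|^{-1/k}$.

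\textbf{Step 2: invariance.} Choose $\delta_0$ so that $C|w|^{-1/k}<\tfrac{k}{2}\cos\eta$ on $H$. For $w\in H_\theta$ we get
$$
\re\bigl(e^{i\theta}F(w)\bigr)\ge\re(e^{i\theta}w)+k\cos\theta-C|w|^{-1/k}>\re(e^{i\theta}w)+\tfrac{k}{2}\cos\eta .
$$
Thus $F(H_\theta)\subset H_\theta$, so $F(H)\subset H$ and $f(\Delta)\subset\Delta$. Iterating, $\re(e^{i\theta}F^n(w))\ge\re(e^{i\theta}w)+n\tfrac{k}{2}\cos\eta\to\infty$. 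Therefore $|F^n(w)|\to\infty$, i.e. $f^n(p)\to0$.

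\textbf{Step 3: asymptotics.} Since $|F^n(w)|\to\infty$, the error term $\epsilon_n=F^{n+1}(w)-F^n(w)-k$ tends to $0$. A Cesàro average then gives
$$
F^n(w)/n=w/n+k+\frac1n\sum_{j<n}\epsilon_j\to k .
$$
Translating back, $F^n(w)=f^n(p)^{-k}$, so $kn\,(f^n(p))^k=kn/F^n(w)\to1$. In particular $\arg\bigl(f^n(p)^k\bigr)\to0$, so the orbit approaches $0$ tangentially to the half-line $\{x^k\in\R^+\}$ bisecting $\Delta$ (the $k$-th root branch being fixed by $\Delta$).

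The main obstacle is Step 2. The error term must be controlled uniformly on the whole unbounded region $H$, including near its boundary directions $\theta=\pm\eta$, where the drift $k\cos\theta$ is smallest. This is why $\eta<\pi/2$ is needed. It is also why I use the lower bound $|w|\gtrsim 1/\delta$ on $H$ to make the error smaller than $\tfrac{k}{2}\cos\eta$. Care is needed that the $O(x^{k+1})$ estimate holds on the whole petal, which is contained in a small disc $|x|\le(2\delta)^{1/k}$.
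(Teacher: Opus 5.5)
The paper does not prove this statement. It quotes the Leau–Fatou flower theorem as a classical result, citing Leau and Fatou and pointing to Milnor and Loray, so there is no internal proof to compare with. Your argument is the standard textbook one. You pass to the approximate Fatou coordinate $w=x^{-k}$, in which a component of $\petal$ becomes the union of half-planes $\{\re(e^{i\theta}w)>1/(2\delta)\}$ and $f$ becomes $w\mapsto w+k+O(|w|^{-1/k})$. Your estimates are correct: the inversion computation, the choice of $\delta_0$ with $C(2\delta_0)^{1/k}\le \tfrac k2\cos\eta$ using $|w|>1/(2\delta)$ on $H$, and the Cesàro argument giving $F^n(w)/n\to k$.

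One step needs an extra line when $k\ge2$. The coordinate $w=x^{-k}$ forgets which of the $k$ components a point lies in. So $F(H)\subset H$ only shows that $f(x)^k\in\bigcup_{|\theta|\le\eta}D_\theta$, i.e. that $f(\Delta)$ lies in the union of all components of $\petal$. To get $f(\Delta)\subset\Delta$ you can argue in either of two ways:
\begin{enumerate}[(a)]
\item Use $f(x)/x=1+O(x^k)$, so $|\arg f(x)-\arg x|=O(\delta)$, while distinct components sit in sectors separated by angular gaps of $(\pi-2\eta)/k>0$.
\item Note that $f(\Delta)$ is connected, so it lies in a single component, and check one point, say on the bisecting half-line.
\end{enumerate}
The same fact is what justifies $F^n(w)=(f^n(p))^{-k}$ in Step 3. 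You defined $F$ using the branch of $w^{-1/k}$ that lands in $\Delta$, so iterating $F$ matches iterating $f$ only if orbits stay in $\Delta$. With that sentence added, the proof is complete, including the tangency conclusion, since $k\arg f^n(p)$ stays in $(-\pi/2-\eta,\pi/2+\eta)$.
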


\begin{remark} 
Since $f^{-1}(x)=x+x^{k+1}+O(x^{k+2})$, Theorem~\ref{th:LeauFatou} guarantees that given $\eta\in(0,\pi/2)$, there exists $\delta_1>0$ such that for every $\delta\le \delta_1$ and every component $\Omega$ of 
$$\Omega^k_{\eta,\delta}=\hspace{-10pt}\mathop{\bigcup}\limits_{\pi-\eta\le \theta\le \pi+\eta}\hspace{-5pt}\left\{x\in\C:|x^k-\delta e^{i\theta}|<\delta\right\}$$ we have
that $f^{-1}(\Omega)\subset\Omega$ and  $f^{-n}(p)\to0$ for every $p\in\Omega.$
Then the union of the components of $\petal$ (called attracting petals of $f$) and the components of $\Omega^k_{\eta,\delta}$ (the attracting petals of $f^{-1}$, also called repelling petals of $f$) is a punctured neighborhood of 0 (see figure~\ref{fig:flower}).
\begin{figure}[h]
\begin{centering}
\includegraphics[scale=.7]{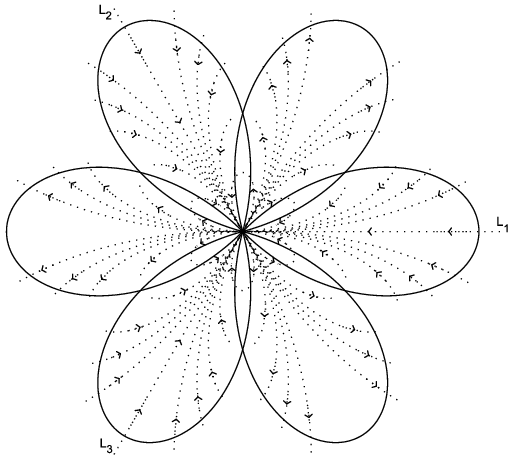}\caption{\small{Attracting and repelling petals for $f(z)=z-z^4+O(z^5)$. The attracting petals are bisected by the half-lines $L_1$, $L_2$ and $L_3$.}}
\label{fig:flower}
\end{centering}
\end{figure}
\end{remark}
  
Consider now a tangent to the identity biholomorphism $F$ in $(\C^2,0)$. 
A \emph{parabolic manifold} of dimension $r$ for $F$ is a connected submanifold $\Omega\subset\C^2$ of dimension $r$ such that $0\in\partial\Omega$, $F(\Omega)\subset\Omega$ and $F^n(p)\to0$ for every $p\in\Omega$. When $r=1$, we say that $\Omega$ is a \emph{parabolic curve} and when $r=2$ we say that it is a \emph{parabolic domain}.

Note that if $F$ has an analytic invariant curve $\Gamma$ which is not contained in the set of fixed points of $F$ (i.e. the set of points $(x,y)$ such that $F(x,y)=(x,y)$) then the existence of parabolic curves is immediate, since the restriction of $F$ to $\Gamma$ is a tangent to the identity biholomorphism in $(\C,0)$ which is not the identity, and we can apply Theorem~\ref{th:LeauFatou}. However, this is not the case in general (see for instance \cite[Section 3]{Hak}).

If we write  
\begin{equation}\label{eq:difeopq}
F(x,y)=\left(x+p_{k+1}(x,y)+p_{k+2}(x,y)+\cdots\!, y+q_{k+1}(x,y)+q_{k+2}(x,y)+\cdots\!\right)
\end{equation}
where $p_j,q_j$ are homogeneous polynomials of degree $j$ and $(p_{k+1},q_{k+1})\neq0$, we say that $k+1$ is the \emph{order} of $F$. A \emph{characteristic direction} of $F$ is a direction $[v]\in\P_\C$ such that 
$$(p_{k+1}(v),q_{k+1}(v))=\lambda v$$ 
for some $\lambda\in\C$. The dynamical interest of characteristic directions is the following: if $F$ has an invariant curve, then its tangent is a characteristic direction; moreover, if an orbit of $F$ converges tangentially to a direction $[v]$, then $[v]$ is a characteristic direction (see \cite[Proposition 2.3]{Hak}).

If $[v]$ is a characteristic direction and $\lambda\neq0$, we say that $[v]$ is \emph{non-degenerate}.

\begin{theorem}[Écalle \cite{Eca}, Hakim \cite{Hak}]\label{th:Ecalle-Hakim}
If $[v]$ is a non-degenerate characteristic direction of $F$ then there are parabolic curves for $F$ where the orbits converge tangentially to $[v]$.
\end{theorem}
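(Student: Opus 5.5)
Up to a linear change of coordinates I would take $[v]=[1:0]$, so that $q_{k+1}(1,0)=0$ and $p_{k+1}(1,0)=\lambda\neq0$. After rescaling $x$ by a suitable $k$-th root I can assume $p_{k+1}(1,0)=-1$. The plan is to blow up and reduce to a map that is approximately the one-dimensional model of Theorem~\ref{th:LeauFatou} in $x$, coupled with a fibre coordinate $u$. The parabolic curve will then be the graph $u=\varphi(x)$ of a holomorphic function over the attracting petals $\petal$ with $\varphi(x)\to0$.

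\emph{Step 1: normalization.} In the chart $y=xu$ of the blow-up, $F$ becomes
$$\tilde F(x,u)=\bigl(x-x^{k+1}+O(x^{k+1}u,x^{k+2}),\; u+x^k(a u+O(u^2))+O(x^{k+1})\bigr),$$
where $a=\partial_u\bigl(q_{k+1}(1,u)-u\,p_{k+1}(1,u)\bigr)|_{u=0}$ is (up to sign) Hakim's director of $[v]$. The constant term of the second component vanishes at order $x^k$ because $[v]$ is characteristic. Next I would perform finitely many polynomial changes $u\mapsto u-\sum_{j\le N}c_jx^j$. Each is solvable term by term as in the proof of Theorem~\ref{th:Briot-Bouquet}, except at the resonant orders, which only produce terms of the form $x^{k+j}\log$-type and can be absorbed in the estimates. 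This pushes the $u$-independent part of the second component to $O(x^{N})$ for $N$ as large as needed. The normalization is the analogue of looking for the Briot--Bouquet curve of the vector field with linear part associated to $(x^{k+1}\partial_x\text{-part}, a x^k u)$.

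\emph{Step 2: dynamics on the base.} For $(x,u)$ with $x\in\petal$ and $|u|$ small, the argument of Theorem~\ref{th:LeauFatou} still applies to the first component, since the perturbation $O(x^{k+1}u)$ is small. Hence the $x$-orbits stay in the petal and satisfy $x_n^k\sim 1/(kn)$.

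\emph{Step 3: the invariant graph (the main obstacle).} I would set up a fixed-point problem for $\varphi$ in the Banach space of holomorphic functions on $\petal$ with $|\varphi(x)|\le|x|^{\beta}$, for a suitable $\beta>0$. The invariance condition reads $\varphi(x_1(x,\varphi(x)))=u_1(x,\varphi(x))$. I would rewrite it as a sum over the orbit,
$$\varphi(x)=-\sum_{n\ge0}\Bigl(\prod_{j\le n}(1+a x_j^k)^{-1}\Bigr)\bigl(\text{remainder terms at }x_n\bigr),$$
with $\prod_{j\le n}(1+ax_j^k)\approx n^{a/k}$. I expect this series to converge and to define a contraction provided $N$ in Step~1 is large relative to $\re a$; this is exactly where the freedom in $N$ is used. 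The delicate part is the uniform control of orbit-dependent products, which requires estimating $x_n$ uniformly in $\varphi$. If the contraction fails directly, the first thing I would try is Hakim's alternative: an implicit/stable-manifold argument, using $\re a$ large after conjugating by $u\mapsto x^{-m}u$ to move into a contracting regime. The fixed point gives an $\tilde F$-invariant curve whose orbits converge to $0$ with $u\to0$. Blowing it down yields parabolic curves, one per component of $\petal$, whose orbits converge tangentially to $[v]$.
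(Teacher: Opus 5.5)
Your scheme is the paper's (Hakim's) argument. You blow up at $[v]$, subtract a high jet of the formal invariant curve, apply Leau--Fatou to the $x$-coordinate over $\petal$, and obtain the graph as the fixed point of an operator summed along $f_\varphi$-orbits. Your multiplier $\prod_{j\le n}(1+ax_j^k)^{-1}$ is a discrete substitute for the paper's $x_0^{-\alpha}x_{n+1}^{\alpha}$ (your $a$ is the paper's $\alpha$), and that substitution is harmless.

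The real gap is Step~1 at resonances. For the $u$-independent remainder $O(x^N)$ to be summable against your multiplier you need $N>k-\re a$. So if $a=-s$ with $s\in\N^*$, the coefficient $b$ of $x^{k+s}$ in the $u$-independent part must be removed. At that order the coefficient equation reads $(s+a)c_s=(\text{terms in }c_1,\dots,c_{s-1})$, so no polynomial change $u\mapsto u-\sum c_ix^i$ removes it. Contrary to your claim, it also cannot be absorbed in the estimates. Your multiplier has modulus comparable to $|x_0|^{s}|x_n|^{-s}$, so this term contributes summands comparable to $|b|\,|x_0|^s|x_n|^k\approx|b|\,|x_0|^s/(kn)$ with asymptotically constant argument, and the series diverges logarithmically. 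The missing ingredient is the one the paper uses: subtract the jet $J_{m-1}\gamma$ of the invariant Dulac series $\gamma(x)=\sum c_{i,j}x^i(x\log x)^j$ given by Poincar\'e--Dulac (Remark~\ref{rem:Poincare-Dulac}). After $u\mapsto u-J_{m-1}\gamma(x)$, both components contain $\log x$ terms and the map is only defined over the petal. The estimates tolerate this; the paper uses the weight $|x^{m-1}\log x|$.

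Step~3 also needs two adjustments. First, $|\varphi|\le|x|^\beta$ alone does not give a contraction. The orbit $x_n=f_\varphi^n(x_0)$ depends on $\varphi$, so estimating $T\varphi-T\psi$ requires comparing $\varphi(x_n)$ with $\varphi(\tilde x_n)$ along two different orbits. The discrepancy $|x_n-\tilde x_n|$ is itself driven by such differences, so you need a Lipschitz bound on $\varphi$. The paper builds this into the closed set $\mathcal{H}_\delta=\{u:\|u\|\le1,\ |u'(x)|\le|x^{m-2}\log x|\}$ and checks that $T$ preserves the derivative bound. The uniform control of $x_n$ that you identify is necessary but not sufficient.

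Second, not only $N$ but also the weight exponent $\beta$ must be large relative to $-\re a$. A term $c\,x^{k+1}u$ in the second component, present in general, gives summands of size about $|x_0|^{-\re a}|x_n|^{\re a+k+1+\beta}$, which are not summable when $\beta\le-\re a-1$. The paper takes $m>1-\re\alpha$ with weight of order $|x|^{m-1}$, that is $\beta>-\re a$. This is precisely what your fallback conjugation $u\mapsto x^{-m}u$ achieves, so it belongs in the main argument rather than as a backup. With these corrections your plan becomes the paper's proof.
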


Although we are only dealing with the two-dimensional case, let us mention that Theorem~\ref{th:Ecalle-Hakim} holds in any dimension.

To relate Theorem~\ref{th:Ecalle-Hakim} to the results in Section~\ref{sec:vectorfields}, let us explain briefly the relation between tangent to the identity biholomorphisms and formal vector fields given by the time-1 flow (see \cite{Lor} or \cite{Ily-Y} for more details).  

Let $F$ be a tangent to the identity biholomorphism in $(\C^2,0)$ of order $k+1$. There exists a unique formal vector field $X$ in $\Cd2$ of order at least two whose time-1 flow is $F$, namely
$$F(x,y)=\left(\sum_{n=0}^\infty \frac{X^n(x)}{n!},\sum_{n=0}^\infty\frac{X^n(y)}{n!}\right),$$
where $X^n$ denotes the $n$-th iteration of $X$ as a derivation, i.e. $X^0(f)=f$ and $X^{j+1}(f)=X(X^j(f))$.
This vector field is called the \emph{infinitesimal generator} of $F$ and denoted $\log F$. 

Is is easy to see that the order of $\log F$ coincides with the order of $F$, and if $F$ is written as in \eqref{eq:difeopq} then the jet of order $k+1$ of $\log F$ is
$$p_{k+1}(x,y)\parcial x+q_{k+1}(x,y)\parcial y.$$
Therefore, the polynomial $xq_{k+1}(x,y)-yp_{k+1}(x,y)$, whose zeros are the characteristic directions of $F$, coincides with the polynomial $P_{\log F}(x,y)$ defined in \eqref{eq:tangentcone}. Moreover, the set of fixed points of $F$ coincides with the singular locus of $\log F$, which is therefore analytic. 

An irreducible formal curve $\Gamma=(g)$ is \emph{invariant} for $F$ if $g\circ F\in \Gamma$. Equivalently, if $\gamma(t)=\left(\gamma_1(t),\gamma_2(t)\right)$ is a parametrization of $\Gamma$, we have that
$$F(\gamma(t))=\gamma(\theta(t))$$
for some $\theta\in\C[[t]]$. A formal curve $\Gamma$ is invariant for $F$ if and only if it is invariant for $\log F$ (see \cite[Proposition 3]{Rib}). 

Let $\pi:M\to\C^2$ be the blow-up at 0 and let $p\in \pi^{-1}(0)$ be the point corresponding to a characteristic direction of $F$. There exists a unique biholomorphism $\tilde F_p$ in $(M,p)$, called the \emph{transform} of $F$ by $\pi$ at $p$, such that $\pi\circ \tilde F_p=F\circ\pi$. This biholomorphism $\tilde F_p$ is tangent to the identity, its order is greater than or equal to the order of $F$ and moreover its infinitesimal generator is the transform of $\log F$ by $\pi$ at $p$ (see \cite{Rib} or \cite{Bro-C-L}).

Consider now a tangent to the identity biholomorphism $F$ in $(\C^2,0)$ with a non-degenerate characteristic direction $[v]$ and set $X=\log F$. Up to a linear change of coordinates we can assume that $[v]=[1:0]$, so with the notations above we have that 
$$p_{k+1}(1,0)=\lambda\neq0\quad\text{and}\quad q_{k+1}(1,0)=0.$$
If $\pi$ is the blow-up at 0 and $p\in\pi^{-1}(0)$ is the point that corresponds to $[v]$ we can find coordinates $(x,y)$ at $p$ such that $\pi^{-1}(0)=(x=0)$ and
\begin{equation}\label{eq:vfafterfblowup}
(\pi^*X)_p=x^k\left[x(\lambda+a(x,y))\parcial x+(\mu y+\beta x+b(x,y))\parcial y\right]
\end{equation}
with $\lambda\neq0$, $\mu, \beta \in\C$, $\nu_0(a)\ge1$ and $\nu_0(b)\ge2$, so $\sat\left((\pi^*X)_p\right)$ is either non-singular (and hence has a formal invariant curve transverse to $(x=0)$) or has an elementary singularity. Observe that the invariant curves of $\sat\left((\pi^*X)_p\right)$ are invariant for $(\pi^*X)_p$ and hence for $\tilde F_p$, since $\log(\tilde F_p)=(\pi^*X)_p$. Then, as we showed in Section~\ref{sec:invariantcurves-vf}, we have two possibilities:
\begin{enumerate}[a)]
\item either $\tilde F_p$ has a formal invariant curve of the form 
$$\Gamma=\left(y-\sum_{n\ge1}c_nx^n\right)$$
(this case always holds if $\mu/\lambda\not\in\N^*$)
\item or $\tilde F_p$ has an invariant Dulac series (an infinite number of them, actually) of the form
$$\Gamma=\left(y-\sum_{i+j\ge1}c_{i,j}x^i(x\log x)^j\right).$$
\end{enumerate}
As we will see in the proof, the parabolic curves provided by Écalle and Hakim theorem are analytic realizations of these invariant curves or Dulac series. In this sense, their result can be seen like a discrete version of Theorem~\ref{th:Briot-Bouquet}.

\begin{proof}[Proof of Theorem~\ref{th:Ecalle-Hakim}]
The strategy used by Écalle in \cite{Eca} is to prove the summable and resurgent character of the formal invariant curve or Dulac series $\Gamma$. We present here the strategy followed by Hakim, which is based on more standard arguments. 
As above, up to a change of coordinates we assume that $[v]=[1:0]$. Let $\pi$ be the blow-up at 0, let $p\in\pi^{-1}(0)$ be the point that corresponds to $[v]$ and let $\tilde F_p$ be the transform of $F$ at $p$. We can find coordinates $(x,y)$ at $p$ such that $\pi^{-1}(0)=(x=0)$ and $(\pi^*X)_p$ is written as in \eqref{eq:vfafterfblowup}, and up to a linear change of coordinates we can assume that $\lambda=-1$. Then, since $\log(\tilde F_p)=(\pi^*X)_p$ we have that $\tilde F_p=(\tilde F_1,\tilde F_2)$ has the form
\begin{align*}
\tilde F_1(x,y)&=x-x^{k+1}+O(x^{k+2},x^{k+1}y)\\
\tilde F_2(x,y)&=y+\alpha x^ky+O(x^ky^2,x^{k+1}y)+x^{k+1}\varphi(x)
\end{align*}
with $\alpha\in\C$ and $\varphi(x)\in\C\{x\}$. As we have seen, there exists a formal invariant curve or Dulac series for $\tilde F_p$ of the form $\Gamma=\left(y-\gamma(x)\right)$, where
$$\gamma(x)=\sum_{n\ge1}c_nx^n \quad\text{or}\quad \gamma(x)=\sum_{i+j\ge1}c_{i,j}x^i(x\log x)^j.$$
We will look for the parabolic curves as analytic realizations of $\Gamma$, so they will be given by the graph of an analytic function defined in an appropriate domain. This analytic function will be found, using Banach fixed point theorem, as the fixed point of some operator. Note first that the parabolic curve does not need to be unique: for instance, if $\tilde F_q$ is the time-1 flow of $x\left(-x\parcial x+\alpha y\parcial y\right)$ and $\alpha<0$ then $\{(x,cx^{-\alpha}):x\in\Delta^1_{\eta,\delta}\}$ is a parabolic curve of $\tilde F_p$ for every $c\in\C$ and $\eta\in(0,\pi/2)$ and for $\delta$ small enough. We need then to impose a high order of contact between the parabolic curve and the formal invariant curve or Dulac series, to be able to use 
Banach fixed point theorem. We start fixing $m\ge2$ such that $m>1-\re\alpha$ and performing an initial change of coordinates of the form
$$y\mapsto y-J_{m-1}\gamma(x),$$
where $J_{m-1}\gamma(x)$ is the jet of order $m-1$ of $\gamma$, so in the new coordinates the order of $\gamma$ is at least $m$ and $\tilde F_p$ is written
\begin{align*}
\tilde F_1(x,y)&=x-x^{k+1}+O(x^{k+2}\log x, x^{k+1}y)\\
\tilde F_2(x,y)&=y+\alpha x^ky+O(x^ky^2,x^{k+1}\log x\,y)+x^{k+m}\varphi_m(x,x\log x)
\end{align*}
with $\varphi_m(x,y)\in\C\{x,y\}$. Let $\ell$ be one of the half-lines $\xi\R^+$, where $\xi^{k}=1$ (these are the bisecting lines of the attracting petals of the restriction of $F$ to $\Gamma$, in case $\Gamma$ is an analytic invariant curve). Fix $\eta\in(0,\pi/2)$ and for any $\delta>0$ denote by $\Delta_{\delta}^{\ell}$ the component of $\petal$ bisected by $\ell$. 
Consider the space
$$\mathcal{B}_{\delta}=\left\{u\in\mathcal{O}(\Delta_{\delta}^{\ell},\C): \sup\Bigl\{\frac{|u(x)|}{|x^{m-1}\log x|}:x\in\Delta_{\delta}^{\ell}\Bigr\}<\infty\right\},$$
which is a Banach space with the norm $\|u\|=\sup\Bigl\{\frac{|u(x)|}{|x^{m-1}\log x|}:x\in\Delta_{\delta}^{\ell}\Bigr\}$, and its closed subset
$$\mathcal{H}_{\delta}=\left\{u\in\mathcal{B}_\delta: \|u\|\le1, |u'(x)|\le |x^{m-2}\log x|\,\,\forall x\in\Delta_{\delta}^\ell\right\}.$$
Given $u\in\mathcal{B}_{\delta}$, we denote 
$$f_u(x)=\tilde F_1(x,u(x)).$$
Arguing as in the proof of Leau-Fatou flower theorem, we have that if $u\in\mathcal{B}_{\delta}$ and $\delta$ is small enough then
$$f_u\left(\Delta_{\delta}^{\ell}\right)\subset \Delta_{\delta}^{\ell}\quad \text{ and } \quad  f_u^n(x)\to0  \,\,\text{ for any } x\in\Delta_\delta^\ell.$$
Therefore, if we find $u\in\mathcal{B}_{\delta}$ such that 
\begin{equation}\label{eq:invarianceparaboliccurve}
\tilde F_2(x,u(x))=u(\tilde F_1(x,u(x)))
\end{equation}
then $\{(x,u(x)):x\in\Delta_\delta^\ell\}$ will be a parabolic curve of $\tilde F_p$. Consider $u\in\mathcal{H}_\delta$ and denote $x_n=f_u(x_{n-1})$ for any $x_0\in \Delta_{\delta}^{\ell}$. If $\delta>0$ is sufficiently small, then it can be shown that the series
$$Tu(x_0)=x_0^{-\alpha}\sum_{n\ge0}\left[x_n^\alpha u(x_n)-x_{n+1}^\alpha\tilde F_2(x_n,u(x_n))\right]$$
is normally convergent and defines an element $Tu\in\mathcal{H}_\delta$; moreover $T:\mathcal{H}_\delta\to\mathcal{H}_\delta$ is a contraction. Observe that the solutions $u\in\mathcal{H}_\delta$ of \eqref{eq:invarianceparaboliccurve} are exactly the fixed points of $T$: the direct implication is immediate, for the converse simply write
$$Tu(x_0)=x_0^{-\alpha}\left[x_0^\alpha u(x_0)-x_1^\alpha \tilde F_2(x_0,u(x_0))+x_1^\alpha Tu(x_1)\right].$$
Then, by Banach fixed point theorem we find, for $\delta$ small enough, a solution $u\in \mathcal{H}_\delta$ of equation~\eqref{eq:invarianceparaboliccurve} and hence a parabolic curve for $\tilde F_p$, whose blow-down is a parabolic curve for $F$.
\end{proof}

\begin{remark}\label{rk:parabolicdomainsHakim}
Let $[v]$ be a non-degenerate characteristic direction of $F$ and assume, with the same notations of the proof of Theorem~\ref{th:Ecalle-Hakim}, that after a blow-up $\re\alpha<0$. Then Hakim proved in \cite{Hak2} (see also \cite{Ari-R}) that there exist parabolic domains $\Omega_1,...,\Omega_k$ of $F$ where every orbit converges tangentially to $[v]$, and moreover for any $j\in\{1,...,k\}$ there exists an injective holomorphic map $\varphi_j:\Omega_j\to\C^2$ conjugating $F$ with the map $(z,w)\mapsto(z+1,w)$. This result was later generalized by Vivas in \cite{Viv} (see also \cite{Lop-R2}) to the so-called \emph{fuchsian} characteristic directions.
\end{remark}

Although the condition of having a non-degenerate characteristic direction is generic, can we guarantee the existence of parabolic curves when it is not satisfied? Abate showed in \cite{Aba} that we always can, as long as $F$ has an isolated fixed point:

\begin{theorem}[Abate \cite{Aba}]\label{th:Abate}
Every tangent to the identity biholomorphism $F$ in $(\C^2,0)$ has either a curve of fixed points or parabolic curves where the orbits converge tangentially to a characteristic direction.
\end{theorem}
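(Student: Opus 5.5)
We will reduce Theorem~\ref{th:Abate} to Theorem~\ref{th:Ecalle-Hakim} applied after finitely many blow-ups, using the infinitesimal generator $X=\log F$ and the Camacho--Sad machinery of Section~\ref{sec:vectorfields}. Assume that $F$ has an isolated fixed point, i.e. $\sing X=\{0\}$, so $X$ is saturated and singular of order $k+1\ge2$. If $X$ is dicritical, every direction is characteristic and a generic one is non-degenerate: $P_X\equiv0$ means $(A_{k+1},B_{k+1})=h(x,y)(x,y)$ with $h\not\equiv0$. Then Theorem~\ref{th:Ecalle-Hakim} applies directly. So we assume $X$ is non-dicritical.

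The key point is that non-degeneracy of a characteristic direction of a transform $\tilde F_q$ at a point $q$ of an exceptional divisor is read from the form \eqref{eq:Camacho-Sad}. Indeed, suppose that at $q$ we have coordinates with divisor $(x=0)$ and
$$(\sigma^*X)_q=x^m\bigl[(\lambda x+a)\partial_x+(\mu y+b)\partial_y\bigr],\qquad \lambda\neq0.$$
Then $\log\tilde F_q=(\sigma^*X)_q$ has order $m+1$, and its homogeneous part of degree $m+1$ is $x^m(\lambda x\partial_x+\mu y\partial_y)$. Evaluating at the direction $[1:0]$ gives $(\lambda,0)=\lambda(1,0)$ with $\lambda\neq0$, so $[1:0]$, which is transverse to the divisor, is a non-degenerate characteristic direction of $\tilde F_q$.

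This suggests the following plan.
\begin{enumerate}[(1)]
\item Apply Theorem~\ref{th:Camacho-Sad} to $X$. This gives a composition $\sigma$ of blow-ups, a point $p$ and coordinates in which $(\sigma^*X)_p$ has the form \eqref{eq:Camacho-Sad}.
\item Check that the transforms are well defined. Each transform of $F$ exists only at points corresponding to characteristic directions. The centres chosen in the proof of Theorem~\ref{th:Camacho-Sad} are singular points of the transformed vector field, hence zeros of $P$, hence characteristic directions. So $\tilde F_p$, the transform of $F$ by $\sigma$ at $p$, is a tangent to the identity biholomorphism with $\log\tilde F_p=(\sigma^*X)_p$.
\item By the computation above, $[1:0]$ is a non-degenerate characteristic direction of $\tilde F_p$ transverse to $\sigma^{-1}(0)$. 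Theorem~\ref{th:Ecalle-Hakim} then gives parabolic curves $\tilde\Omega$ for $\tilde F_p$, with orbits converging to $p$ tangentially to $[1:0]$.
\item Blow down. The set $\sigma(\tilde\Omega)$ is invariant under $F$, and its orbits converge to $0$. Since $\tilde\Omega$ meets $\sigma^{-1}(0)$ only at $p$ and is transverse to it, $\sigma$ is injective on $\tilde\Omega$ near $p$, so $\sigma(\tilde\Omega)$ is a parabolic curve for $F$.
\item Establish tangential convergence. Each orbit in $\tilde\Omega$ converges to $p$, so its image converges to $0$ along the direction $\sigma_1(p)\in\pi_1^{-1}(0)$, where $\pi_1$ is the first blow-up. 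This direction is characteristic, as noted in step (2).
\end{enumerate}

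The main obstacle is the compatibility in step (2): the transforms $\tilde F_q$ must exist at each intermediate point and satisfy $\log\tilde F_q=(\pi^*\log F)_q$, including at dicritical steps. If an intermediate $X_j$ is dicritical, I would stop there instead of using the conclusion of Theorem~\ref{th:Camacho-Sad}. At a generic point $q$ of the new divisor, $\sat$ of the transform is non-singular and transverse to the divisor. The transform then has the form $x^{m}(\lambda x\partial_x+\cdots)$ with $\lambda\neq0$, so Theorem~\ref{th:Ecalle-Hakim} applies at $q$ exactly as in step (3). A secondary check is that $\sigma(\tilde\Omega)$ really is a submanifold with $0$ in its boundary, which follows from the transversality in step (4).
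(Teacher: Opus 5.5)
Your proposal is correct and follows essentially the paper's second argument (the one attributed to \cite{Bro-C-L}). Since $0$ is an isolated fixed point, $\log F$ is saturated, Theorem~\ref{th:Camacho-Sad} puts its transform in the form \eqref{eq:Camacho-Sad}, whose time-1 flow has the form \eqref{eq:Abate} with $[1:0]$ non-degenerate, and Theorem~\ref{th:Ecalle-Hakim} followed by blow-down finishes.

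One wording fix in step (2): every point of the exceptional divisor is a singular point of $\sigma^*X$ itself. What makes the centres zeros of $P$, and hence characteristic directions, is that the \emph{saturated} transform is singular there, and property $(\star)$ forces this.
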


\begin{proof}
Abate's proof is an adaptation of Camacho-Sad theorem to the context of tangent to the identity biholomorphisms, to guarantee that if 0 is an isolated fixed point then after a finite number of blow-ups $\sigma$ we find a point $p\in\sigma^{-1}(0)$ and coordinates $(x,y)$ at $p$ such that the transform of $F$ at $p$ is written as
\begin{equation}\label{eq:Abate}
\tilde F(x,y)=\left(x+\lambda x^{m+1}+x^m A(x,y),y+\mu x^my+x^mB(x,y)\right),
\end{equation}
with $\lambda\neq0$, $\mu/\lambda\not\in\N^*\cup 1/\N$ and $\nu_0(A),\nu_0(B)\ge2$. Since $[v]=[1:0]$ is a non-degenerate characteristic direction for $\tilde F$, by Theorem~\ref{th:Ecalle-Hakim} we have that $\tilde F$ has parabolic curves where the orbits converge tangentially to $[v]$. The blow-down of these parabolic curves are parabolic curves for $F$.

An alternative approach to prove the theorem, given in \cite{Bro-C-L}, is to apply directly Camacho-Sad theorem to the infinitesimal generator $X$ of $F$. If 0 is an isolated fixed point of $F$, then $X$ is saturated so by Theorem~\ref{th:Camacho-Sad} there is a finite sequence of blow-ups $\sigma$, a point $p\in\sigma^{-1}(0)$ and coordinates $(x,y)$ at $p$ such that $(\sigma^*X)_p$ is written as in \eqref{eq:Camacho-Sad}. The time-1 flow of $(\sigma^*X)_p$, which is the transform of $F$ by $\sigma$ at $p$, can the be written in those coordinates as in \eqref{eq:Abate} and the existence of parabolic curves follows again by Theorem~\ref{th:Ecalle-Hakim}.
\end{proof}

\section{Parabolic domains for biholomorphisms in $\C^2$: two generalizations of Écalle-Hakim result}\label{sec:LRRS-LR}

Two natural questions arise from Theorem~\ref{th:Ecalle-Hakim}:
\begin{enumerate}[$\circ$]
\item The theorem shows that non-degenerate characteristic directions of $F$ support parabolic curves. Is this true for any characteristic direction of $F$?
\item It also shows that some formal invariant curves of $F$ (the ones that are strong invariant curves of $\log F$) support parabolic curves. Is this true for every formal invariant curve of $F$?
\end{enumerate}

Concerning the second question, the answer is essentially positive:

\begin{theorem}[López-Hernanz, Raissy, Ribón, Sanz \cite{Lop-R-R-S}]\label{th:LRRS}
Let $F$ be a tangent to the identity biholomorphism in $(\C^2,0)$ and let $\Gamma$ be a formal invariant curve of $F$ not contained in the set of fixed points of $F$. Then there exist parabolic manifolds $\Omega_1,\Omega_2,...,\Omega_r$ of $F$, where each $\Omega_i$ is either a parabolic curve or a parabolic domain, such that all the orbits in $\Omega_1\cup...\cup\Omega_r$ are asymptotic to $\Gamma$. 
\end{theorem}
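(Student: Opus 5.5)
We reduce, by blow-ups, to a normal form in which $\Gamma$ becomes a non-singular curve transverse to the exceptional divisor, and then realize $\Gamma$ analytically by a fixed point argument in the spirit of Hakim's proof of Theorem~\ref{th:Ecalle-Hakim}.

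First we pass to the infinitesimal generator. Set $X=\log F$. Since $\Gamma$ is $F$-invariant, it is $X$-invariant (\cite[Proposition 3]{Rib}), and it is not contained in $\sing X$. By Theorem~\ref{th:Seidenberg}, applied together with further blow-ups to desingularize $\Gamma$ and separate it from the other components of the total transform, we find a composition of blow-ups $\sigma$ and a point $p=\tilde\Gamma\cap\sigma^{-1}(0)$ with coordinates $(x,y)$ at $p$ with the following properties:
\begin{itemize}
\item[$\circ$] $\sigma^{-1}(0)=(x=0)$;
\item[$\circ$] the strict transform is $\tilde\Gamma=(y-\gamma(x))$ with $\gamma\in x\C[[x]]$;
\item[$\circ$] $(\sigma^*X)_p=x^m\tilde X$, where $\tilde X$ is either non-singular or has a simple singularity.
\end{itemize}
Since $\tilde\Gamma$ is a non-singular invariant curve transverse to the invariant divisor, in the simple case $\tilde\Gamma$ is one of the two curves of Theorem~\ref{th:Briot-Bouquet}. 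The transform $\tilde F$ of $F$ is the time-1 flow of $(\sigma^*X)_p$, so it suffices to find parabolic manifolds for $\tilde F$ whose orbits are asymptotic to $\tilde\Gamma$ (i.e. $y-J_N\gamma(x)=O(x^{N+1})$ along orbits, for every $N$) and then blow them down.

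Next we split into cases according to the restriction of $\tilde X$ to $\tilde\Gamma$. Write $\tilde X$ restricted to $\tilde\Gamma$, in the parameter $x$, as $x\,(\lambda+\cdots)\partial_x$ (simple case, $\tilde\Gamma$ strong), as $x^{s}(c+\cdots)\partial_x$ with $s\ge2$ (saddle-node with $\tilde\Gamma$ weak), or as a non-vanishing field (non-singular case). In all cases $(\sigma^*X)_p$ restricted to $\tilde\Gamma$ is $x^{k+1}(-1+O(x))\partial_x$ after normalization, with $k\ge1$ because $\Gamma\not\subset\operatorname{Fix}F$. This gives Leau--Fatou petals $\Delta^\ell_\delta$ as in Theorem~\ref{th:LeauFatou}. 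In the strong case $[1:0]$ is a non-degenerate characteristic direction of $\tilde F$ and Hakim's proof applies verbatim, giving a parabolic curve asymptotic to $\tilde\Gamma$.

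In the remaining cases the transversal part of $\tilde F$ has the form $y\mapsto y+x^{k}(\alpha x^{r}+\cdots)y+\cdots$, with $r\ge0$. We apply the same strategy as Hakim: change $y\mapsto y-J_{N}\gamma(x)$ for $N$ large, and look for $u$ defined on a petal with $|u|\le|x|^{N'}$ solving \eqref{eq:invarianceparaboliccurve}. The role of $x^\alpha$ is now played by a solution $E(x)$ of the homogeneous linearized equation along $f_u$; for instance $E=\exp(\beta x^{-r}+\alpha'\log x)$ when $r\ge1$. The operator $T$ is defined by the analogous telescoping sum. The dichotomy is then as follows:
\begin{itemize}
\item[$\circ$] on petal directions where $|E(x_n)|$ decreases along orbits (the transversal dynamics is repelling relative to $\Gamma$), $T$ is a contraction and yields a parabolic curve;
\item[$\circ$] where it is attracting, we instead show directly that a wedge-shaped domain $\{x\in\Delta^\ell_\delta,\ |y-J_N\gamma(x)|<|x|^{N}\}$ is $\tilde F$-invariant with all orbits converging to $0$ asymptotically to $\tilde\Gamma$. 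This gives a parabolic domain, by estimates like those in Remark~\ref{rk:parabolicdomainsHakim}.
\end{itemize}
Collecting all petals gives $\Omega_1,\dots,\Omega_r$.

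The main obstacle is the weak (saddle-node) case, where $\gamma$ is divergent and the transversal multiplier $x^{k+r}$ is of higher order than the tangential one. There the attracting/repelling sectors in $x$ for the transversal direction, governed by $\arg x^{-r}$, interleave with the Leau--Fatou petals. We must choose sub-sectors on which both a uniform sign condition on $\re(\beta x^{-r})$ and petal invariance hold, and check that the truncation error $x^{N}$ of the divergent $\gamma$ is dominated by $|E|$ there. I would try first to use the Borel summability of $\gamma$ \cite{Mar-R} to replace $J_N\gamma$ by a sectorial analytic sum, so that the error becomes exponentially small and the estimates reduce to the convergent case.
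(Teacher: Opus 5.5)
Your architecture is the paper's: reduce along the infinitely near points of $\Gamma$ until $\tilde\Gamma$ is smooth and transverse, compare with a model flow having first integral $yE(x)$, and then use a Hakim-type operator $T$ in saddle directions and an invariant wedge in node directions. The strong case is indeed just Hakim, and so is the non-singular case (which you grouped with the weak one), since in both the tangent to $\tilde\Gamma$ is a non-degenerate characteristic direction of $\tilde F$.

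In the weak case, note first that the transversal term has \emph{lower}, not higher, order than the tangential one. In the paper's notation, $\tilde F_1=x-x^{k+p+1}+O(x^{2k+2p+1})$ and $\tilde F_2=y\exp\bigl(x^kA(x)+O(x^{k+p+1})\bigr)+O(x^{k+1})$, with $p\ge1$, $\deg A\le p$ and $A(0)\neq0$. This is what gives $E=\exp\int A(x)x^{-p-1}\,dx$ its poles of orders $p,\dots,1$ plus a logarithmic term. Your form $y\mapsto y+x^k(\alpha x^r+\cdots)y$ with tangential order $k+1$ would give $E=\exp(\alpha x^r/r)$, not $\exp(\beta x^{-r})$.

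The genuine gap is the choice of domains, exactly at the point you flagged. Rotate a direction $\ell=\xi\R^+$ ($\xi^{k+p}=1$) to $\R^+$; this replaces $A_j$ by $\xi^{k+j}A_j$, and keep calling the new coefficients $A_j$. Thin sub-sectors work when $\re A_0\neq0$. If $\re A_0=0$, however, then for $x=t+is$ one has $\re\bigl(-A_0/(px^p)\bigr)\approx-(\im A_0)\,s/t^{p+1}$. This changes sign across $\ell$ and reaches size $\theta t^{-p}$ on a sector of half-opening $\theta$. Along orbits $s_n/t_n^{p+1}\to0$, so on no sector around $\ell$ is $|E(x_n)/E(x_0)|$ bounded: it reaches $\exp(c\theta|x_0|^{-p})$. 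Consequently $T$ does not map into functions $O(|x|^{m-1})$, and the wedge $\{|y|<|x|^N\}$ is not invariant either. Domains on one side of $\ell$ do not help, since orbits accumulate on $\ell$ and may cross it.

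This can happen in every direction at once: if $k=p$ then $\xi^k=\pm1$ for all admissible $\xi$, so $A_0\in i\R^*$ makes every direction degenerate. A concrete example is $F(x,y)=\bigl(x-x^3,\,y\exp(x(i+A_1x))\bigr)$ with $\Gamma=(y)$. There $|E|\asymp|x|^{\re A_1}e^{-\im x/|x|^2}$ on both petals, and your plan yields no parabolic manifold at all.

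The missing idea is the paper's, in three steps.
\begin{enumerate}
\item Call $\ell$ a node direction if and only if $(\re A_0,\dots,\re A_{p-1})<0$ lexicographically.
\item Make $\re A_p>0$ by extra blow-ups along $\Gamma$; each one adds $1$ to $A_p$ and leaves $A_0,\dots,A_{p-1}$ unchanged.
\item Let $r$ be the first index with $\re A_r\neq0$, and work on the horn-shaped domains $R_{d,e,\varepsilon}$ defined by $-d(\re x)^{r+1}<\im x<e(\re x)^{r+1}$.
\end{enumerate}
On these domains the real parts of the oscillating terms $A_j/x^{p-j}$, $j<r$, are negligible against the $r$-th term (or bounded, if $r=p$). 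So along orbits $|E|$ decreases in saddle directions, or increases in node directions, up to bounded factors. Moreover $R_{d,e,\varepsilon}$ is $f_u$-invariant thanks to the high-order normalization of $\tilde F_1$, which you also need to arrange.

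Finally, Borel summability is not needed, and \cite{Mar-R} does not apply directly anyway, since $\log F$ is only formal. Once $|E(x_n)/E(x_0)|$ is bounded along orbits, the polynomial truncation error $O(x^{k+p+m})$ contributes at most $\sum_n|x_n|^{k+p+m}\lesssim|x_0|^m$ to $Tu(x_0)$, which is all the contraction argument requires.
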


Given a formal curve $\Gamma$, we say that an orbit $\{(x_n,y_n)\}$ of $F$ is \emph{asymptotic} to $\Gamma$ if the following condition holds: denoting by $\{q_n\}$ the sequence of infinitely near points of $\Gamma$, we have that if $\pi_1:M_1\to\C^2$ is the blow-up at $0$ then $\pi_1^{-1}(x_n,y_n)\to q_1$; if $\pi_2:M_2\to M_1$ is the blow-up at $q_1$, then $\pi_2^{-1}\circ\pi_1^{-1}(x_n,y_n)\to q_2$ and so on. In this case, $\Gamma$ is necessarily invariant for $F$.

\begin{proof}
The proof is based on the same strategy as Hakim's proof of Theorem~\ref{th:Ecalle-Hakim}. After a finite number of blow-ups centered at the infinitely near points of $\Gamma$ and polynomial changes of coordinates, we find coordinates $(x,y)$ such that the strict transform $\tilde\Gamma$ of $\Gamma$ is non-singular and transverse to $(x=0)$ and the transform $\tilde F=(\tilde F_1,\tilde F_2)$ of $F$ is written as
\begin{align*}
\tilde F_1(x,y)&=x-x^{k+p+1}+O(x^{2k+2p+1})\\
\tilde F_2(x,y)&=y\exp\left(x^kA(x)+O(x^{k+p+1})\right)+O(x^{k+1})
\end{align*}
where $k\ge1$, $p\ge0$ and $A(x)$ is a polynomial of degree at most $p$ with $A(0)\neq0$ if $p\ge1$. Observe that $\tilde F$ is close to the biholomorphism $G$ given by the time-1 flow of the vector field
$$Y=x^k\left[-x^{p+1}\parcial x+yA(x)\parcial y\right].$$
If $\{(x_n,y_n)\}$ is an orbit of $G$ converging to 0, then by Leau-Fatou flower theorem $\{x_n\}$ converges tangentially to a direction $\ell=\xi\R^+$, with $\xi^{k+p}=1$. Moreover, $Y$ has 
\begin{equation}\label{eq:defE}
yE(x)=y\exp\left(\displaystyle\int \frac{A(x)}{x^{p+1}}\,dx \right)
\end{equation}
as first integral, so the behavior of the orbits depends on the asymptotics of $E$ near the direction $\ell$. According to the dynamics of the toy model $G$, we fix a direction $\ell=\xi\R^+$ with $\xi^{k+p}=1$, and assume up to a linear change of coordinates that $\ell=\R^+$. We say that $\ell$ is a \emph{node direction} if $p\ge1$ and
$$\left(\re A_0,\re A_1,...,\re A_{p-1}\right)<0$$
in the lexicographic order, where $A(x)=A_0+A_1x+\cdots+A_px^p$; otherwise, we say that $\ell$ is a \emph{saddle direction}. Observe that, since we are only interested in the orbits that converge asymptotically to $\Gamma$, in case $p=0$ we always consider $\ell$ as a saddle direction: in case $\re A_0<0$ there exists a parabolic domain for each $\ell$ as we mentioned in Remark~\ref{rk:parabolicdomainsHakim}, but the orbits in this parabolic domain (except for the ones in the parabolic curve provided by Theorem~\ref{th:Ecalle-Hakim}) are not asymptotic to $\Gamma$. 

Up to a finite number of extra blow-ups following the infinitely near points of $\Gamma$, that do not change the coefficients $A_0,...,A_{p-1}$ and hence the saddle or node character of $\ell$, we assume that 
$$\re A_p>0.$$ 
We fix $m\ge p+2$ and consider the change of coordinates $(x,y)\mapsto (x,y-J_{p+m-1}\gamma(x))$, where $(x,\gamma(x))$ is a parametrization of $\tilde\Gamma$. In the new coordinates, the order of $\gamma$ is at least $p+m$ and $\tilde F$ is written
\begin{align*}
\tilde F_1(x,y)&=x-x^{k+p+1}+O(x^{2k+2p+1})\\
\tilde F_2(x,y)&=y\exp\left(x^kA(x)+O(x^{k+p+1})\right)+O(x^{k+p+m}).
\end{align*}

Denote by $r$ the first index $0\le r\le p$ such that $\re A_r\ne0$ (which is well defined since $\re A_p>0$). Given $d,e,\varepsilon>0$, denote
$$R_{d,e,\varepsilon}=\left\{x\in\C: |x|<\varepsilon, \re x>0, -d(\re x)^{r+1}<\im x<e(\re x)^{r+1}\right\}.$$

Assume first that $\ell$ is a saddle direction. In this case, we will find a parabolic curve of $\tilde F$ given by the graph of an analytic function defined in an appropriate domain. Consider the space 
$$\mathcal{B}_{d,e,\varepsilon}=\left\{u\in\mathcal{O}(R_{d,e,\varepsilon},\C): \sup\Bigl\{\frac{|u(x)|}{|x|^{m-1}}:x\in R_{d,e,\varepsilon}\Bigr\}<\infty\right\},$$
which is a Banach space with the norm $\|u\|=\sup\Bigl\{\frac{|u(x)|}{|x|^{m-1}}:x\in R_{d,e,\varepsilon}\Bigr\}$, and its closed subset
$$\mathcal{H}_{d,e,\varepsilon}=\left\{u\in\mathcal{B}_{d,e,\varepsilon}: \|u\|\le1, |u'(x)|\le |x|^{m-p-2}\,\,\forall x\in R_{d,e,\varepsilon}\right\}.$$
Given $u\in\mathcal{B}_{\delta}$, we denote 
$$f_u(x)=\tilde F_1(x,u(x)).$$
If $u\in\mathcal{B}_{d,e,\varepsilon}$ and $d,e,\varepsilon$ are small enough, then 
$$f_u\left(R_{d,e,\varepsilon}\right)\subset R_{d,e,\varepsilon}\quad \text{ and } \quad  f_u^n(x)\to0  \,\,\text{ for any } x\in R_{d,e,\varepsilon}.$$
Therefore, if we find $u\in\mathcal{B}_{d,e,\varepsilon}$ such that 
$$\tilde F_2(x,u(x))=u(\tilde F_1(x,u(x)))$$
then $\{(x,u(x)):x\in R_{d,e,\varepsilon}\}$ will be a parabolic curve of $\tilde F$. 
Consider $u\in\mathcal{H}_{d,e,\varepsilon}$ and denote $x_n=f_u(x_{n-1})$ for any $x_0\in R_{d,e,\varepsilon}$. 
If $d,e,\varepsilon>0$ are sufficiently small then the series
$$Tu(x_0)=E(x_0)^{-1}\sum_{n\ge0}\left[E(x_n)u(x_n)-E(x_{n+1})\tilde F_2(x_n,u(x_n))\right],$$ 
where $E(x)$ is the function defined by \eqref{eq:defE}, is normally convergent and defines an element $Tu\in\mathcal{H}_{d,e,\varepsilon}$; moreover, $T:\mathcal{H}_{d,e,\varepsilon}\to\mathcal{H}_{d,e,\varepsilon}$ is a contraction. Observe that $T$ coincides with Hakim's operator if $p=0$, since in this case $E(x)=x^{A_0}$. Since the solutions $u\in\mathcal{H}_{d,e,\varepsilon}$ of the invariance equation above are exactly the fixed points of $T$, applying Banach fixed point theorem we find, for $d,e,\varepsilon$ small enough,  an element $u\in \mathcal{H}_{d,e,\varepsilon}$ such that $\{(x,u(x)):x\in R_{d,e,\varepsilon}\}$ is a parabolic curve for $\tilde F$, so its blow-down is a parabolic curve for $F$ (an alternative argument for the existence of a fixed point, which simplifies the technical computations, is the use of Schauder fixed point theorem as it is done in \cite{Lop-R-S-V}). Varying $m$, we can show that the orbits in the parabolic curve are asymptotic to $\tilde\Gamma$.

Assume now that $\ell$ is a node direction and define, for $d,e,\varepsilon>0$, 
$$S_{d,e,\varepsilon}=\left\{(x,y)\in\C^2:x\in R_{d,e,\varepsilon}, |y|<|x|\right\}.$$
If $d,e,\varepsilon$ are small enough then $\tilde F(S_{d,e,\varepsilon})\subset S_{d,e,\varepsilon}$ and moreover every orbit in $S_{d,e,\varepsilon}$ converges to 0 asymptotically to $\tilde\Gamma$.
The blow-down of $S_{d,e,\varepsilon}$ is then a parabolic domain for $F$ whose orbits converge asymptotically to $\Gamma$. This finishes the proof, taking $r=k+p$.
\end{proof}

\begin{remark}
It can also be shown that, with the hypotheses of Theorem~\ref{th:LRRS}, every convergent orbit that is asymptotic to $\Gamma$ is eventually contained in $\Omega_1\cup...\cup\Omega_r$. Theorem~\ref{th:LRRS} also holds in higher dimension (see \cite{Lop-R-S-V}). The idea of the proof in the higher dimensional case is a generalization the one presented here: after a finite number of changes of coordinates, blow-ups with smooth centers and ramifications adapted to $\Gamma$, we can find coordinates $(x,\mathbf y)\in\C\times\C^{n-1}$ such that the strict transform $\tilde \Gamma$ of $\Gamma$ is non-singular and transverse to $(x=0)$ and the transform $\tilde F$ of $F$ has the form
\begin{align*}
\tilde F_1(x,\mathbf y)&=x-x^{k+p+1}+O(x^{2k+2p+1})\\
\tilde F_2(x,\mathbf y)&=\exp\left(x^kD(x)+x^{k+p}C\right)\mathbf y+O(x^{k+p+1})
\end{align*}
where $k\ge1$, $p\ge0$, $D(x)$ is a diagonal matrix of polynomials of degree at most $p-1$ and $C$ is a constant matrix commuting with $D(x)$. As in the two-dimensional case, this expression allows to identify, for each attracting direction $\ell=\xi\R^+$ with $\xi^{k+p}=1$, which variables have a node behavior in that direction and which have a saddle behavior. Using a fixed point theorem we find, associated to each direction $\ell$, a parabolic manifold whose dimension is equal to the number of node variables in that direction plus one. Although we are only dealing with the tangent to the identity case, let us mention that the results obtained in \cite{Lop-R-R-S} and \cite{Lop-R-S-V} are more general and can be applied to any biholomorphism $F$ in $(\C^n,0)$ with a formal invariant curve $\Gamma$ not contained in the set of fixed points of $F$ and such that $\lambda=(F|_\Gamma)'(0)$ is either a root of unity or satisfies $|\lambda|<1$ (see \cite{Lop-R-S-V} for details).
\end{remark}

Concerning the first question stated at the beginning of this section, the answer is also essentially positive:

\begin{theorem}[López-Hernanz, Rosas \cite{Lop-R}]\label{th:LR}
Let $F$ be a tangent to the identity biholomorphism in $(\C^2,0)$ and let $[v]$ be a characteristic direction of $F$. Then either there is a curve of fixed points tangent to $[v]$ or there exist parabolic manifolds for $F$ where the orbits converge tangentially to $[v]$.
\end{theorem}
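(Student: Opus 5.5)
The plan is to reduce Theorem~\ref{th:LR} to Theorem~\ref{th:LRRS} by producing, for any characteristic direction $[v]$ of $F$, a formal invariant curve of $F$ tangent to $[v]$ that is not contained in the fixed point set. Set $X=\log F$. If $X$ is not saturated and some component of $\sing X$ (the fixed point set of $F$) is tangent to $[v]$, we are in the first alternative. Otherwise I will work with the point $p_1\in\pi_1^{-1}(0)$ corresponding to $[v]$ after the blow-up $\pi_1$. Since $[v]$ is a zero of $P_X$, either $X$ is dicritical, or $\sat\bigl((\pi_1^*X)_{p_1}\bigr)$ is non-singular or strictly singular at $p_1$. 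In the dicritical case, or when the saturation is non-singular and transverse to the divisor, there is a formal invariant curve through $p_1$ transverse to $\pi_1^{-1}(0)$. Its blow-down is tangent to $[v]$, and Theorem~\ref{th:LRRS} then gives parabolic manifolds with orbits asymptotic to it, hence converging tangentially to $[v]$.

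The main case is when $\sat\bigl((\pi_1^*X)_{p_1}\bigr)$ is singular at $p_1$ and the divisor is invariant. Here I will localize the constructive Camacho--Sad argument (Cano's proof of Theorem~\ref{th:Camacho-Sad}) at $p_1$ instead of choosing the point via property (i). The aim is a point infinitely near $p_1$, after further blow-ups, with a simple singularity having a strong (or at least some) invariant curve transverse to the total divisor. Its blow-down would be a curve invariant for $F$ through $p_1$, hence tangent to $[v]$. The difficulty is that property $(\star_1)$ need not hold at $p_1$: the index $\cs\bigl((\pi_1^*X)_{p_1},\pi_1^{-1}(0)\bigr)$ might be a nonnegative rational. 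In that case Camacho--Sad gives no invariant curve transverse to the divisor through $p_1$ at all; for instance, the separatrices might all be contained in the divisor tree above $p_1$.

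I therefore split into two cases. If some infinitely near point above $p_1$ carries a transverse separatrix, I apply Theorem~\ref{th:LRRS} to it. If not, I expect the main obstacle: the reduction of singularities above $p_1$ (Theorem~\ref{th:Seidenberg}) gives a tree of invariant exceptional components whose corner points are simple, with no transverse separatrix. In that situation I would construct parabolic domains directly, not parabolic curves. First, use the Camacho--Sad indices along the chain to locate a corner point where, in suitable coordinates, $\tilde F$ looks like the time-1 flow of $x^ay^b\left[\lambda x\parcial x+\mu y\parcial y\right]$ (or a saddle-node analogue). Then show that in a suitable sectorial region with $|x|,|y|$ small, both coordinates are attracted, mimicking the node-direction argument in the proof of Theorem~\ref{th:LRRS} with the domains $S_{d,e,\varepsilon}$. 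Finding the right corner and sign conditions on the indices, namely where the real parts of the relevant eigenvalue ratios give node-like behavior, is the step I expect to be hardest. I would try to get them from the index sum formula (i) together with (ii) and (iii), which forbid all corners from being saddles.
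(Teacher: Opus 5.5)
There is a genuine gap, in the last case.

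Your first reductions agree with the paper's. Either there is a curve of fixed points tangent to $[v]$, or $\tilde X=(\pi^*X)_p$ has a formal invariant curve other than $E$, and Theorem~\ref{th:LRRS} gives the parabolic manifolds. (This is your transverse separatrix at an infinitely near point; in the dicritical case the curve need not be transverse to $E$, but being different from $E$ suffices.) What remains is the case where $E$ is the only invariant curve of $\sat\tilde X$, which is singular at $p$. This is the heart of the theorem, and the tool you propose cannot settle it.

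The relations (i)--(iii) do not detect that $\sat\tilde X$ is genuinely singular at $p$. Blowing up a regular foliation having $E$ as a leaf (say $\parcial y$ with $E=(x=0)$) gives a $(-1)$-curve $D$ whose only singular point is the corner $D\cap E'$, with eigenvalue ratio $-1$ and $\cs(D)=\cs(E')=-1$. This configuration satisfies (i)--(iii), its only corner is a saddle, and it has no transverse separatrix. So (i)--(iii) do not forbid all corners from being saddles. The negative-definiteness contradiction in Camacho and Sad's original argument disappears here because the strict transform $E'$ is not compact.

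The missing idea is the theorem of Camacho, Lins Neto and Sad used in the paper: a saturated singular formal vector field whose unique invariant curve is non-singular has a saddle-node in its reduction of singularities. The reason is that without saddle-nodes the reduction would coincide with the resolution of the separatrix. A smooth separatrix needs no blow-up, so $\sat\tilde X$ would already be simple at $p$ and have two invariant curves. Since there is no transverse separatrix, this saddle-node lies at a corner of the divisor.

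Moreover, the corner you need is in general not non-degenerate, so a criterion on real parts of eigenvalue ratios cannot suffice. For instance, $\sat\tilde X=x\parcial x+(y-x)\parcial y$ with $E=(x=0)$ is reduced by one blow-up. The only singular point above $p$ is then a saddle-node at $D\cap E'$ with weak separatrix $D$. More generally, whenever one blow-up suffices in this case, the restriction of the foliation to $D$ has zeros of total order $\ord(\sat\tilde X)+1\ge 2$, all at the unique corner, which forces a weak separatrix along $D$.

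At such a corner both axes consist of fixed points and the eigenvalue ratio is $0$. The contraction comes from the term $cy^{p+1}$ in the normal form $\hat F(x,y)=\bigl(x+x^{r+1}y^m[a+O(x,y)],\,y+x^ry^m[cy^{p+1}+O(x,y^{p+2})]\bigr)$, not from the linear part. The dynamical input needed is Vivas' theorem on irregular characteristic directions, which gives at least $rp$ parabolic domains. Their images under the reduction are parabolic domains for $\tilde F$ with orbits tending to $p$, that is, tangentially to $[v]$. Without these two results, or genuine substitutes for them, the last case of your argument is not established.
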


\begin{proof}
Let $\pi$ be the blow-up at the origin and let $p$ be the point that corresponds to $[v]$. Denote $X=\log F$, $E=\pi^{-1}(0)$ and $\tilde X=(\pi^*X)_p$. If the transform $\tilde F$ of $F$ at $p$ has a curve of fixed points we are done, so we assume that this is not the case. If $\tilde X$ has a formal invariant curve $\Gamma$ different from $E$, then $\Gamma$ is invariant for $\tilde F$, so by Theorem~\ref{th:LRRS} we obtain parabolic manifolds for $\tilde F$ whose blow-down are parabolic manifolds for $F$ and we are done. Otherwise, $\tilde X$ only has $E$ as invariant curve, so by Camacho-Sad theorem $E$ is invariant for $\sat \tilde X$ and then $X$ is non-dicritical. Then, since $[v]$ is a zero of $P_X$ and $\tilde F$ has no curves of fixed points, $\tilde X$ is strictly singular. In this case, we can use a result of Camacho, Lins Neto and Sad \cite{Cam-L-S} which guarantees that if $Y$ is a saturated singular formal vector field with a unique invariant curve, which moreover is non-singular, then there is a point $q$ in the reduction of singularities of $Y$ such that the saturation of the transform of $Y$ at $q$ has a saddle-node singularity. We find therefore a saddle-node singularity $q$ in the reduction of $\sat\tilde X$, which belongs to two components of the exceptional divisor since $\sat\tilde X$ has only $E$ as invariant curve. Then we can find coordinates $(x,y)$ at $q$ such that the transform of $\sat\tilde X$ at $q$ can be written
$$x^ry^m\left[x(a+A(x,y))\parcial x+(bx+B(x,y))\parcial y\right]$$
with $r,m\ge1$, $a\neq0$, $b\in\C$, $\nu_0(A)\ge1$, $\nu_0(B)\ge2$ and $x\nmid B$, so the transform $\hat F$ of $\tilde F$ at $q$ has the form
$$\hat F(x,y)=\left(x+x^{r+1}y^m\left[a+O(x,y)\right],y+x^ry^m\left[cy^{p+1}+O(x,y^{p+2})\right]\right)$$
for some $p\ge1$ and some $c\neq0$. This kind of biholomorphisms, that appear after a blow-up centered at a so-called irregular characteristic direction, were studied by Vivas in \cite{Viv}, and she showed that there exist at least $rp$ parabolic domains. The image of each of these domains by the reduction is a parabolic domain for $\tilde F$, finishing the proof of the theorem. 
\end{proof}

\begin{remark}
We do not know if Theorems~\ref{th:Abate} and \ref{th:LR} hold in higher dimension. Consider a tangent to the identity biholomorphism $F$ in $(\C^n,0)$ with $n\ge3$; as we have mentioned, non-degenerate characteristic directions of $F$ support parabolic curves and formal invariant curves of $F$ support either curves of fixed points or parabolic manifolds. Therefore, two possible strategies to deal with this higher dimensional case are the following: 
\begin{enumerate}[$\circ$]
\item If $F$ has a formal invariant curve, then the existence of curves of fixed points or parabolic manifolds follows, generalizing Theorem~\ref{th:Abate}. However, this is not always the case. Gómez-Mont and Luengo provided in \cite{Gom-L}  examples of vector fields in $(\C^3,0)$ that do not have formal invariant curves, so their time-1 flow $F$ does not either. However, in these examples $F$ has parabolic curves, since after one blow-up there are non-degenerate characteristic directions (see \cite{Aba-T}).
\item If after some blow-ups we find a non-degenerate characteristic direction, the existence of parabolic curves is guaranteed. However, this is not possible in general. Mongodi and Ruggiero provide in \cite{Mon-R} an example of a tangent to the identity biholomorphism $F$ in $(\C^3,0)$ for which no non-degenerate characteristic directions appear after any number of (reasonable) blow-ups. However, their example has parabolic manifolds, since $F$ has formal invariant curves and no curves of fixed points.
\end{enumerate}
\end{remark}


\begin{thebibliography}{99}

\bibitem{Aba} \obra{Abate, M.}{The residual index and the dynamics of holomorphic maps tangent to the identity}{Duke Math. J. 107 (2001), no. 1, 173--207}

\bibitem{Aba-T} \obra{Abate, M.; Tovena, F.}{Parabolic curves in $\C^3$}
{Abstr. Appl. Anal. 2003, no. 5, 275--294}

\bibitem{Ari-R} \obra{Arizzi, M.; Raissy, J.}{On \'Ecalle-Hakim's theorems in holomorphic dynamics}{Frontiers in complex dynamics, 387--449, Princeton Math. Ser., 51, Princeton Univ. Press, Princeton, NJ, 2014}
              
\bibitem{Bal} \obra{Balser, W.}{From divergent power series to analytic functions}
{Lecture Notes in Mathematics, 1582. Springer-Verlag, Berlin, 1994}

\bibitem {Bri-B} \obra{Briot, C.; Bouquet, J.C.}{Recherches sur les propri\'{e}t\'{e}s des fonctions d\'{e}finies par des \'{e}quations diff\'{e}rentielles}{J. \'{E}cole Polytechnique 21, 1856, 133--198}

\bibitem{Bro-C-L} \obra{Brochero Martínez, F.E.; Cano, F.; López-Hernanz, L.} {Parabolic curves for diffeomorphisms in $\C^2$}{Publ. Mat. 52 (2008), no. 1, 189--194}

\bibitem{Cam-L-S} \obra{Camacho, C.; Lins Neto, A.; Sad, P.}{Topological
invariants and equidesingularization for holomorphic vector fields}{J. Differential Geometry 20 (1984) 143--174} 

\bibitem{Cam-S} \obra{Camacho, C.; Sad, P.}{Invariant varieties through singularities of holomorphic vector fields}
{Ann. of Math. (2)  115  (1982), no. 3, 579--595}
 
\bibitem{Cam-S2} \obra{Camacho, C.: Sad, P.}{Pontos singulares de equa\c c\~oes diferenciais anal\'{\i}ticas}{
16$\sp {\rm o}$ Col\'oquio Brasileiro de Matem\'atica. Instituto de Matem\'atica Pura e Aplicada (IMPA), Rio de Janeiro, 1987}

\bibitem{Can2} \obra{Cano, J.}{An extension of the Newton-Puiseux polygon construction to give solutions of Pfaffian forms}{Ann. Inst. Fourier (Grenoble)  43  (1993),  no. 1, 125--142}

\bibitem{Can} \obra{Cano, J.}{Construction of invariant curves for singular holomorphic vector fields}
{Proc. Amer. Math. Soc.  125  (1997),  no. 9, 2649--2650}

\bibitem{Can-C-D} \obra{Cano, F; Cerveau, D.; Deserti, J.}{Théorie élémentaire des feuilletages holomorphes singuliers}{Éditions Belin, 2013}

\bibitem{Dul} \obra{Dulac, H.}{Solutions d'un système d'équations différentielles dans le voisinage de valeurs singulières}
{Bull. Soc. Math. France 40, 324--383 (1912)}

\bibitem{Eca} \obra{\'Ecalle, J.}{Les fonctions r\'{e}surgentes. Tome III. L'\'{e}quation du pont et la classification analytique des objects locaux} {Publications Math\'{e}matiques d'Orsay, 85-5. Universit\'{e} de Paris-Sud, D\'{e}partement de Math\'{e}matiques, Orsay, 1985}

\bibitem{Fat} \obra{Fatou, P.}{Sur les \'{e}quations fonctionelles}{Bull. Soc. Math. France 47 (1919) 161--271}

\bibitem{Gom-L} \obra{Gómez-Mont, X.; Luengo, I.} {Germs of holomorphic vector fields in $\C^3$ without a separatrix}{Invent. Math. 109 (1992), no. 2, 211--219}

\bibitem{Hak} \obra{Hakim, M.}{Analytic transformations of $(\C^p,0)$ tangent to the identity}{Duke Math. J. 92 (1998), no. 2, 403--428}

\bibitem{Hak2} \obra{Hakim, M.}{Transformations tangent to the identity. Stable pieces of manifolds}{Pr\'epublication d'Orsay num\'ero 30, Orsay, 1997}

\bibitem{Huk-K-M} \obra{Hukuhara, M.; Kimura, T.; Matuda, T.}{Equations diff\'{e}rentielles ordinaires du premier ordre dans le champ complexe}{Publications of the Mathematical Society of Japan, 7. The Mathematical Society of Japan, Tokyo 1961}


\bibitem{Ily-Y} \obra{Ilyashenko, Y.; Yakovenko, S.}{Lectures on analytic differential equations}{Graduate Studies in Mathematics, 86. American Mathematical Society, Providence, RI, 2008}

\bibitem{Lea} \obra{Leau, L.}{\'{E}tude sur les \'{e}quations fonctionelles \`{a} une ou \`{a} plusieurs variables}{Ann. Fac. Sci. Toulouse Sci. Math. Sci. Phys. 11 (1897) 1--110}

\bibitem{Lop-R-R-S} \obra{López-Hernanz, L.; Ribón, J.; Sanz-Sánchez, F.; Vivas, L.}
{Stable manifolds of biholomorphisms in $\C^n$ asymptotic to formal curves}{Proc. Lond. Math. Soc. (3) 125 (2022), no. 2, 277--317}

\bibitem{Lop-R-S-V} \obra{López-Hernanz, L.; Raissy, J.; Ribón, J.; Sanz-Sánchez, F.}
{Stable manifolds of two-dimensional biholomorphisms asymptotic to formal curves}
{Int. Math. Res. Not. IMRN 2021, no. 17, 12847--12887}

\bibitem{Lop-R} \obra{L\'opez-Hernanz, L.; Rosas, R.}{Characteristic directions of two-dimensional biholomorphisms}{Compositio Mathematica, 156 (2020), no. 5, 869--880}

\bibitem{Lop-R2} \obra{López-Hernanz, L.; Rosas, R.}{A flower theorem in dimension two}{Ergodic Theory Dynam. Systems, 2025}

\bibitem{Lor} \obra{Loray, F.}{Pseudo-groupe d'une singularit\'e de feuilletage holomorphe en dimension deux}{Ensaios Matemáticos, 2021, Volume 36, 53--274}

\bibitem{Mat-M} \obra{Mattei, J.F.; Moussu, R.}{Holonomie et int\'{e}grales premi\`{e}res} {Ann. Sci. \'{E}cole Norm. Sup. (4)  13  (1980), no. 4, 469--523}

\bibitem{Mar-R} \obra{Martinet, J.; Ramis, J.P.}{Probl\`emes de modules pour des \'equations diff\'erentielles non
lin\'eaires du premier ordre}{Inst. Hautes \'Etudes Sci. Publ. Math., {\bf 55} (1982), 63--164}

\bibitem{Mil}\obra{Milnor, J.} {Dynamics in one complex variable}{Annals of Mathematics Studies, 160. Princeton University Press, Princeton, NJ, 2006}

\bibitem{Mon-R} \obra{Mongodi, S.; Ruggiero, M.}{Birational properties of tangent to the identity germs without non-degenerate singular directions}{J. Lond. Math. Soc. (2) 107 (2023), no. 4, 1242--1296}

\bibitem{Ram} \obra{Ramis, J.P.}{Les s\'{e}ries $k$-sommables et leurs applications} {Complex analysis, microlocal calculus and relativistic quantum theory (Proc. Internat. Colloq., Centre Phys., Les Houches, 1979),  pp. 178--199,
Lecture Notes in Phys., 126, Springer, Berlin-New York, 1980}

\bibitem{Rib} \obra{Rib\'{o}n, J.}{Families of diffeomorphisms without periodic curves}{Michigan Math. J.  53  (2005),  no. 2, 243--256}

\bibitem{Sei}\obra{Seidenberg, A.}{Reduction of the singularities of the differential equation $Ady=Bdx$}{Am. J. of Math., 90 (1968), 248--269}

\bibitem{Viv}\obra{Vivas, L.}{Degenerate characteristic directions for maps tangent to the identity}{Indiana Univ. Math. J. 61 (2012), no. 6, 2019--2040}

\end{thebibliography}
\end{document}